\newtheorem{thm}{Theorem}[section]
\theoremstyle{definition}
\newtheorem{prop}[thm]{Proposition}
\newtheorem{defn}[thm]{Definition}
\newtheorem{lem}[thm]{Lemma}
\newtheorem{rem}[thm]{Remark}
\newtheorem{ex}[thm]{Example}
\numberwithin{equation}{section}
\begin{document}
{\footnotetext{This research was supported with a grant from Farhangian University}
\title[$S$-2-absorbing submodules and $S$-2-absorbing second submodules]{$S$-2-absorbing submodules and $S$-2-absorbing second submodules}

\author{Faranak
 Farshadifar}
\address{Assistant Professor, Department of Mathematics, Farhangian University, Tehran, Iran.}
\email{f.farshadifar@cfu.ac.ir}

\begin{abstract}
Let $R$ be a commutative ring with identity, $S$ be a multiplicatively closed subset of $R$, and let $M$ be an $R$-module.
In this paper, we introduce the notion of $S$-2-absorbing second submodules of $M$ as a generalization of $S$-second submodules and strongly 2-absorbing second submodules of $M$.
Let $N$ be a submodule of $M$ such that $Ann_R(N) \cap S= \emptyset$. We say that $N$ is a \textit{$S$-2-absorbing second submodule of $M$} if  there exists a fixed $s \in S$ and whenever $abN\subseteq K$, where $a, b \in R$ and $K$ is a submodule of $M$, implies either that $saN\subseteq K$ or $sbN\subseteq K$ or $sabN=0$. We investigate some properties of this class of submodules. Also, we obtain some results concerning $S$-2-absorbing submodules of $M$.
\end{abstract}

\subjclass[2010]{13C13, 13C05}%
\keywords {Second submodule, $S$-second submodule, $S$-prime submodule, $S$-2-absorbing submodule, $S$-2-absorbing second submodule}

\maketitle
% ----------------------------------------------------------------
\section{Introduction}
\noindent
Throughout this paper, $R$ will denote a commutative ring with
identity and $\Bbb Z$ will denote the ring of integers.

Consider a non-empty subset $S$ of $R$. We call $S$ a multiplicatively closed subset (briefly, m.c.s.) of $R$ if (i) $0 \not \in S$, (ii) $1 \in S$, and (iii) $s\acute{s} \in S$ for all $s, \acute{s} \in S$ \cite{WK16}. Note that $S = R-P$ is a m.c.s. of $R$ for every prime ideal $P$ of $R$.
Let $M$ be an $R$-module. A proper submodule $P$ of $M$ is said to be \emph{prime} if for any $r \in R$ and $m \in M$ with $rm \in P$, we have $m \in P$ or $r \in (P:_RM)$ \cite{Da78}. A non-zero submodule $N$ of $M$ is said to be \emph{second} if for each $a \in R$, the homomorphism $ N \stackrel {a} \rightarrow N$ is either surjective
or zero \cite{Y01}.

Let $S$ be a m.c.s. of $R$ and let $P$ be a submodule of an $R$-module $M$ with $(P :_R M) \cap S =\emptyset$. Then the submodule
$P$ is said to be an \emph{$S$-prime submodule} of $M$ if there exists a fixed $s\in S$, and whenever $am \in P$, then $sa \in (P :_R M)$ or
$sm \in P$ for each $a \in R$, $m \in M$ \cite{satk19}. Particularly, an ideal $I$ of $R$ is said to be an \emph{$S$-prime ideal} if $I$ is an $S$-prime submodule of the $R$-module $R$.
In \cite{FF22} F. Farshadifar,  introduced  the notion of $S$-second submodules as a dual notion of $S$-prime submodules and investigated some properties of this class of modules.
Let $S$ be a m.c.s. of $R$ and $N$ a submodule of an $R$-module $M$ with $Ann_R(N) \cap S =\emptyset$. Then the submodule
$N$ is said to be an \emph{$S$-second submodule} of $M$ if there exists a fixed $s\in S$, and whenever $rN\subseteq K$, where $r \in R$ and $K$ is a submodule of $M$, then  $rsN=0$ or $sN\subseteq K$ \cite{FF22}.

The notion of 2-absorbing ideals as a generalization of prime ideals was introduced and studied in \cite{Ba07}. A proper ideal $I$ of $R$ is called a \emph{2-absorbing ideal}
of $R$ if whenever $a, b, c \in R$ and $abc \in I$, then $ab \in I$ or
$ac \in I$ or $bc \in I$ \cite{Ba07}. The authors in \cite{YS11} and \cite{pb12}, extended 2-absorbing ideals
 to 2-absorbing submodules. A proper submodule $N$ of an $R$-module $M$ is called a \emph{2-absorbing submodule }of  $M$ if whenever $abm \in N$ for some $a, b \in R$ and $m \in M$, then $am \in N$ or $bm \in N$ or $ab \in (N :_R M)$. In \cite{AF16}, the authors introduced the dual notion of 2-absorbing submodules (that is, \emph{strongly 2-absorbing second submodules}) of $M$ and investigated some properties of these classes of modules.
A non-zero submodule $N$ of $M$ is said to be a \emph{strongly 2-absorbing second submodule of} $M$ if whenever  $a, b \in R$, $K$ is a submodule of $M$,
and $abN\subseteq K$, then $aN\subseteq K$ or $bN\subseteq K$ or $ab \in Ann_R(N)$ \cite{AF16}.

Let $M$ be an $R$-module and $S$ be a m.c.s. of $R$.
In \cite{uatk20}, the authors introduced the notion of $S$-2-absorbing submodules of $M$ which is a generalization of $S$-prime submodules and 2-absorbing submodules and investigated some properties of this class of submodules.
A submodule $P$ of $M$ is said to be an \textit{$S$-2-absorbing} if $(P :_R M)\cap S =\emptyset$
and there exists a fixed $s \in S$ such that $abm \in P$ for some $a, b \in R$ and
$m \in M$ implies that $sab \in (P :_R M)$ or $sam\in P$ or $sbm \in P$. In particular, an ideal $I$ of $R$ is said to be an \textit{$S$-2-absorbing ideal} if $I$ is an $S$-2-absorbing submodule of the $R$-module $R$ \cite{uatk20}.

Let $S$ be a m.c.s. of $R$ and $M$ be an $R$-module.
The main purpose of this paper is to introduce the notion of $S$-2-absorbing second submodules of $M$ as a generalization of $S$-second submodules and strongly 2-absorbing second submodules of $M$. We provide some information about this class of submodules.  Moreover, we investigate some properties of $S$-2-absorbing submodules of $M$.
% ----------------------------------------------------------------
% ----------------------------------------------------------------
% ----------------------------------------------------------------
% ----------------------------------------------------------------
% ----------------------------------------------------------------
% ----------------------------------------------------------------
\section{$S$-2-absorbing submodules}
The following theorem gives a useful characterization of $S$-2-absorbing submodules.
\begin{thm}\label{l181.4}
Let $S$ be a m.c.s. of $R$ and $N$ be a submodule of an $R$-module $M$ with $(N :_R M)\cap S =\emptyset$. Then $N$ is $S$-2-absorbing if and only if there is a fixed $s \in S$ such that for every $a, b \in R$, we have either $(N :_Ms^2ab) = (N: _Ms^2a)$ or $(N :_Ms^2ab) = (N: _Ms^2b)$ or $(N :_Ms^3ab) =M$.
\end{thm}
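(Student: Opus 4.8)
The plan is to prove both directions by unwinding the definitions and exploiting the notation $(N:_M x) = \{m \in M : xm \in N\}$ for $x \in R$. For the forward direction, suppose $N$ is $S$-2-absorbing with witness $s \in S$. Fix $a, b \in R$. The inclusions $(N :_M s^2 a) \subseteq (N :_M s^2 ab)$ and $(N :_M s^2 b) \subseteq (N :_M s^2 ab)$ are automatic, so one only needs the reverse containment in the appropriate case, and similarly $(N :_M s^3 ab) \subseteq M$ is trivial. So take an arbitrary $m \in (N :_M s^2 ab)$, i.e. $ab(s^2 m) \in N$. Writing $m' = s^2 m$, the $S$-2-absorbing hypothesis gives $sab \in (N:_R M)$, or $sa m' \in N$, or $sb m' \in N$; that is, $s^3 ab \in (N:_R M)$, or $s^3 a m \in N$, or $s^3 b m \in N$. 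The subtlety is that which of the three alternatives occurs may depend on $m$, whereas the statement demands a single alternative uniform in $m$ (for the fixed pair $a,b$). This is the main obstacle, and it is resolved by the standard ``prime-avoidance-style'' covering argument: the module $(N :_M s^2 ab)$ is the union of the three submodules $\{m : s^3 ab \in (N:_RM)\}$ (which is either all of $M$ or empty), $(N :_M s^3 a)$, and $(N :_M s^3 b)$; a module that is a union of finitely many submodules must equal one of them (or, handling the three-submodule case directly, one shows if it is covered by two of them it equals one of them). Hence either $s^3 ab \in (N :_R M)$, giving $(N :_M s^3 ab) = M$, or $(N :_M s^2 ab) \subseteq (N :_M s^3 a)$, or $(N :_M s^2 ab) \subseteq (N :_M s^3 b)$.

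There is a mismatch of powers of $s$ to reconcile: the argument above naturally produces $(N :_M s^2 ab) \subseteq (N :_M s^3 a)$, whereas the statement asserts $(N :_M s^2 ab) = (N :_M s^2 a)$. I would bridge this by noting $(N :_M s^2 a) \subseteq (N :_M s^3 a) \subseteq (N :_M s^2 ab)$ — the first inclusion because multiplying by an extra $s$ only enlarges, the last because if $s^3 a m \in N$ then $s^2 ab(sm) \in N$ so $sm$, and hence (after absorbing the extra $s$ into the witness) ... here one should instead simply work with the witness $s$ replaced by a suitable power, or observe that if one proves the characterization with $s^3$ in place of $s^2$ it is equivalent up to renaming the fixed element. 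Concretely, I would first establish: $(N :_M s^3 ab) = (N :_M s^3 a)$ or $= (N :_M s^3 b)$ or $(N:_M s^4 ab) = M$, via the covering argument, and then observe that since $s$ was an arbitrary witness we may as well have started by calling $s$ what is here $s$, adjusting exponents; alternatively verify directly that $(N:_M s^2 ab) \subseteq (N:_M s^3 a)$ together with $(N:_M s^2 a) \subseteq (N:_M s^2 ab)$ forces, after one more multiplication by $s$, the clean equality claimed. The bookkeeping of exponents is routine but must be done carefully to land exactly on the exponents $s^2, s^2, s^3$ in the statement.

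For the converse, assume the stated alternative holds with fixed $s \in S$, and suppose $abm \in N$ for some $a, b \in R$, $m \in M$. Then $m \in (N :_M ab) \subseteq (N :_M s^2 ab)$. If $(N :_M s^3 ab) = M$ then in particular $s^3 ab \cdot M \subseteq N$, i.e. $s^3 ab \in (N:_R M)$, so with witness $s^3$ (or after checking $s^3$ serves as a valid fixed element, which it does since $(N:_RM) \cap S = \emptyset$ is about membership of a power of $s$, and $s^3 \in S$) we get the $S$-2-absorbing conclusion. Otherwise, say $(N :_M s^2 ab) = (N :_M s^2 a)$; then $m \in (N :_M s^2 a)$, so $s^2 a m \in N$, giving $sam \in N$ after noting $s^2 a m \in N$ already yields the conclusion with a power of $s$ as the witness; symmetrically in the $b$ case. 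Thus $N$ is $S$-2-absorbing (with $s^3$, or equivalently $s$ after replacing $s$ by $s^3$ at the outset, as the fixed element). I expect this direction to be essentially immediate once the exponent conventions are pinned down; the only thing to be careful about is confirming that a power $s^k$ of a valid witness is again a valid witness, which follows directly from the definition since $S$ is multiplicatively closed and the condition $(N:_RM)\cap S = \emptyset$ is independent of the choice of witness.
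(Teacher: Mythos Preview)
Your converse direction is essentially the paper's argument: from $abm\in N$ deduce $m\in(N:_M s^2ab)$, apply the trichotomy, and take $s^3$ as the new fixed witness.

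In the forward direction, however, your choice of how to feed the data into the $S$-2-absorbing condition is the source of the exponent trouble you yourself flag, and the patching you propose does not actually close the gap. You apply the hypothesis to the triple $(a,b,s^2m)$, obtaining $sab\in(N:_RM)$ or $s^3am\in N$ or $s^3bm\in N$; this yields only $(N:_M s^2ab)\subseteq (N:_M s^3a)\cup(N:_M s^3b)$ in the non-trivial case, and the chain $(N:_M s^2a)\subseteq(N:_M s^2ab)\subseteq(N:_M s^3a)$ does \emph{not} force $(N:_M s^2ab)=(N:_M s^2a)$. Your suggestion to ``rename the fixed element'' does not help either, since no power $t=s^k$ turns the inclusion $(N:_M s^2ab)\subseteq(N:_M s^3a)$ into an equality of the shape $(N:_M t^2ab)=(N:_M t^2a)$; the exponent offset between the $ab$-side and the $a$-side persists.

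The paper avoids this entirely by applying the $S$-2-absorbing condition to the triple $(sa,\,sb,\,m)$ rather than $(a,\,b,\,s^2m)$: from $(sa)(sb)m\in N$ one gets $s(sa)m\in N$ or $s(sb)m\in N$ or $s(sa)(sb)\in(N:_RM)$, i.e.\ $s^2am\in N$ or $s^2bm\in N$ or $s^3ab\in(N:_RM)$, which are exactly the exponents in the statement. After this, the union/covering argument you describe (a submodule equal to the union of two submodules must equal one of them) finishes the proof immediately, with no exponent bookkeeping needed. So the missing idea is simply where to place the factors of $s$ when invoking the definition.
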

\begin{proof}
Let $N$ be an $S$-2-absorbing submodule of $M$ and $m \in  (N :_Ms^2ab)$. Then $(sa)(sb)m \in N$. So by assumption, either $s^2am \in N$ or $s^2bm \in N$ or $abs^3 \in (N :_R M)$. If $s^2am \in N$ or $s^2bm \in N$, then $(N :_Ms^2ab) \subseteq  (N: _Ms^2a) \cup (N: _Ms^2b)$. Clearly, $(N: _Ms^2a) \cup (N: _Ms^2b)\subseteq (N :_Ms^2ab)$. So, $(N: _Ms^2a) \cup (N: _Ms^2b)= (N :_Ms^2ab)$. As $N$ is a submodule of $M$, it cannot be written as union of two distinct submodules. Thus $(N :_Ms^2ab) = (N: _Ms^2a)$ or $(N :_Ms^2ab) = (N: _Ms^2b)$. If $abs^3 \in (N :_R M)$, then $(N :_Ms^3ab) =M$.
Conversely, let $a, b \in R$ and $m \in M$ such that $abm \in N$. Then $m \in (N :_R s^2ab)$. By given hypothesis,
we have $(N :_Ms^2ab) = (N: _Ms^2a)$ or $(N :_Ms^2ab) = (N: _Ms^2b)$ or $(N :_Ms^3ab) =M$. Thus $s^2am \in N$
or $s^2bm \in N$ or $s^3ab \in (N :_R M)$.  Hence, $s^3am \in N$
or $s^3bm \in N$ or $s^3ab \in (N :_R M)$. Now by setting $s_1=s^3$, we get the result.
\end{proof}

\begin{lem}\label{l181.5}
\cite[Lemma 3.2]{AF16} Let $N$ be a submodule of an $R$-module $M$ and $r \in R$.
Then for every flat $R$-module $F$, we have $F\otimes(N :_M r) = (F \otimes N : _Mr)$.
\end{lem}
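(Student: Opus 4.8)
The plan is to exhibit the colon submodule $(N :_M r)$ as the kernel of an explicit $R$-homomorphism and then use the fact that tensoring with a flat module preserves kernels (equivalently, preserves left-exact sequences).

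First I would consider the $R$-homomorphism $\varphi \colon M \to M/N$ given by $\varphi(m) = rm + N$. By construction $\ker \varphi = \{m \in M : rm \in N\} = (N :_M r)$, so there is an exact sequence $0 \to (N :_M r) \to M \xrightarrow{\varphi} M/N$. Since $F$ is flat, applying $F \otimes_R -$ keeps this exact, yielding an exact sequence
\[
0 \longrightarrow F \otimes (N :_M r) \longrightarrow F \otimes M \xrightarrow{\ 1 \otimes \varphi\ } F \otimes (M/N).
\]
In particular the canonical map $F \otimes (N :_M r) \to F \otimes M$ is injective, so I may regard $F \otimes (N :_M r)$ as the submodule $\ker(1 \otimes \varphi)$ of $F \otimes M$.

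Next I would identify the target of $1 \otimes \varphi$. Applying flatness of $F$ to $0 \to N \to M \to M/N \to 0$ gives a natural isomorphism $F \otimes (M/N) \cong (F \otimes M)/(F \otimes N)$ under which $F \otimes N$ is identified with its image in $F \otimes M$. Transporting $1 \otimes \varphi$ across this isomorphism, it becomes the map $F \otimes M \to (F \otimes M)/(F \otimes N)$ sending $x \mapsto rx + F \otimes N$, whose kernel is $\{x \in F \otimes M : rx \in F \otimes N\} = (F \otimes N :_{F \otimes M} r)$. Combining the two steps gives $F \otimes (N :_M r) = (F \otimes N :_{F \otimes M} r)$ as submodules of $F \otimes M$, which is exactly the asserted equality.

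The $R$-linearity of $\varphi$ and the compatibility of the various functorial identifications with the multiplication-by-$r$ maps are routine diagram chases. The one place where the hypothesis is genuinely used is that flatness is invoked twice --- once to preserve exactness of $0 \to (N :_M r) \to M \to M/N$, and once to present $F \otimes (M/N)$ as $(F \otimes M)/(F \otimes N)$ --- and I expect the only real care needed is bookkeeping: making sure every colon module and every tensor factor is read inside $F \otimes M$ via these canonical maps, so that the final equality is literally an equality of submodules.
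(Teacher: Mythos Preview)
Your argument is correct and is the standard proof of this fact: realize $(N:_M r)$ as the kernel of the multiplication-by-$r$ map $M \to M/N$, tensor with the flat module $F$ to preserve left exactness, and then identify $F \otimes (M/N)$ with $(F \otimes M)/(F \otimes N)$ so that the tensored map becomes multiplication by $r$ into the quotient. The bookkeeping you flag is indeed the only point requiring care, and it goes through as you describe.

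There is nothing to compare against in the paper itself: the lemma is simply quoted from \cite[Lemma~3.2]{AF16} without proof. Your write-up supplies exactly the argument one would expect to find behind that citation.
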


\begin{thm}\label{t181.6}
Let $S$ be a m.c.s. of $R$, $N$ be an $S$-2-absorbing submodule of an $R$-module $M$, and $F$ be a flat $R$-module. If
$Ann_R(F \otimes N) \cap S=\emptyset$, then $F \otimes N$ is an $S$-2-absorbing submodule of $F \otimes M$.
\end{thm}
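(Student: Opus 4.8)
The plan is to transport the colon-ideal characterization of Theorem~\ref{l181.4} through the exact functor $F\otimes-$, using Lemma~\ref{l181.5} to interchange tensoring with residuation. First I would record the harmless flatness facts: since $F$ is flat, tensoring $N\hookrightarrow M$ gives an inclusion $F\otimes N\hookrightarrow F\otimes M$, so $F\otimes N$ is genuinely a submodule of $F\otimes M$, and for any submodule $L$ of $M$ with $N\subseteq L$ the module $F\otimes L$ is identified with a submodule of $F\otimes M$ containing $F\otimes N$, with inclusions and equalities among such modules preserved. Next I would verify the standing hypothesis in the definition of an $S$-2-absorbing submodule, namely $(F\otimes N:_R F\otimes M)\cap S=\emptyset$; this is the step where the assumption $\mathrm{Ann}_R(F\otimes N)\cap S=\emptyset$ is meant to be used (together with flatness, via $(F\otimes M)/(F\otimes N)\cong F\otimes(M/N)$), and I expect it to be the only genuinely delicate point in the argument.

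For the core of the proof, apply the forward direction of Theorem~\ref{l181.4} to $N$: there is a fixed $s\in S$ such that for all $a,b\in R$ at least one of
\[
(N:_M s^2ab)=(N:_M s^2a),\qquad (N:_M s^2ab)=(N:_M s^2b),\qquad (N:_M s^3ab)=M
\]
holds. I claim this same $s$ witnesses the characterization for $F\otimes N$ inside $F\otimes M$. Fix $a,b\in R$ and tensor the relevant equality with $F$. In the first case, Lemma~\ref{l181.5} applied with $r=s^2ab$ and with $r=s^2a$ converts $F\otimes(N:_M s^2ab)=F\otimes(N:_M s^2a)$ into $(F\otimes N:_{F\otimes M} s^2ab)=(F\otimes N:_{F\otimes M} s^2a)$; the second case is the same with $b$ replacing $a$. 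In the third case, Lemma~\ref{l181.5} with $r=s^3ab$ gives $(F\otimes N:_{F\otimes M} s^3ab)=F\otimes(N:_M s^3ab)=F\otimes M$. Hence for every $a,b\in R$ one of the three colon-equalities of Theorem~\ref{l181.4} holds for the pair $F\otimes N\subseteq F\otimes M$ with the fixed element $s$.

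Finally, since the preliminary step furnishes $(F\otimes N:_R F\otimes M)\cap S=\emptyset$, the converse direction of Theorem~\ref{l181.4} applies and yields that $F\otimes N$ is an $S$-2-absorbing submodule of $F\otimes M$ (with witnessing element a suitable power of $s$, exactly as in the proof of that theorem). The main obstacle, as noted, is the bookkeeping needed to see that the colon ideal $(F\otimes N:_R F\otimes M)$ avoids $S$; everything after that is a routine transport of the characterization across $F\otimes-$, with Lemma~\ref{l181.5} doing all the real work.
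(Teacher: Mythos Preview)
Your approach is essentially identical to the paper's: apply the forward direction of Theorem~\ref{l181.4} to $N$, push each of the three colon-equalities through $F\otimes-$ via Lemma~\ref{l181.5}, and conclude with the converse direction of Theorem~\ref{l181.4}. You are in fact more scrupulous than the paper in flagging the disjointness condition $(F\otimes N:_R F\otimes M)\cap S=\emptyset$, which the paper's proof simply passes over.
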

\begin{proof}
Since $N$ is an $S$-2-absorbing submodule of $M$, by Theorem \ref{l181.4}, we have either $(N :_Ms^2ab) = (N: _Ms^2a)$ or $(N :_Ms^2ab) = (N: _Ms^2b)$ or $(N :_Ms^3ab) =M$ for $a,b \in R$. Assume that $(N :_Ms^2ab) = (N: _Ms^2a)$. Then by Lemma \ref{l181.5},
we have
$$
(F \otimes N : _Ms^2ab) = F \otimes (N :_M s^2ab) = F \otimes (N :_Ms^2a) = (F \otimes N :_M s^2a).
$$
If $(N :_Ms^3ab) = M$, then by Lemma \ref{l181.5}, we have
$$
(F \otimes N :_Ms^3ab) = F \otimes (N :_Ms^3ab) =F \otimes M.
$$
Hence by Theorem \ref{l181.4}, $F \otimes N$ is $S$-2-absorbing submodule of $F \otimes M$.
\end{proof}

\begin{thm}\label{t181.7}
Let $S$ be a m.c.s. of $R$ and $F$ be a faithfully flat $R$−module. Then $N$ is an $S$-2-absorbing submodule of $M$ if
and only if $F \otimes N$ is an $S$-2-absorbing submodule of $F \otimes M$.
\end{thm}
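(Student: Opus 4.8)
The plan is to reduce the forward implication to Theorem \ref{t181.6} and to prove the converse by descending the criterion of Theorem \ref{l181.4} along the faithfully flat base change $F \otimes -$.

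First I would settle the avoidance conditions $(N :_R M) \cap S = \emptyset$ and $(F \otimes N :_R F \otimes M) \cap S = \emptyset$ that occur in the definition of $S$-2-absorbing submodule on the two sides. Observe that for $r \in R$ one has $r \in (N :_R M)$ exactly when multiplication by $r$ is the zero endomorphism of $M/N$, and $r \in (F \otimes N :_R F \otimes M)$ exactly when multiplication by $r$ is the zero endomorphism of $(F \otimes M)/(F \otimes N) \cong F \otimes (M/N)$, where the last isomorphism uses flatness of $F$. Since $F$ is faithfully flat, $F \otimes -$ is a faithful functor, so a morphism vanishes if and only if its image under $F \otimes -$ vanishes; applying this to multiplication by $r$ on $M/N$ gives that $(N :_R M) \cap S = \emptyset$ if and only if $(F \otimes N :_R F \otimes M) \cap S = \emptyset$. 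With this in hand, the implication that $N$ being $S$-2-absorbing forces $F \otimes N$ to be $S$-2-absorbing is immediate from Theorem \ref{t181.6}, since a faithfully flat module is in particular flat.

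For the converse, assume $F \otimes N$ is an $S$-2-absorbing submodule of $F \otimes M$; by the previous paragraph $(N :_R M) \cap S = \emptyset$. By Theorem \ref{l181.4} there is a fixed $s \in S$ such that for all $a, b \in R$ at least one of $(F \otimes N :_{F \otimes M} s^2ab) = (F \otimes N :_{F \otimes M} s^2a)$, or $(F \otimes N :_{F \otimes M} s^2ab) = (F \otimes N :_{F \otimes M} s^2b)$, or $(F \otimes N :_{F \otimes M} s^3ab) = F \otimes M$ holds. By Lemma \ref{l181.5}, $(F \otimes N :_{F \otimes M} r) = F \otimes (N :_M r)$ for every $r \in R$, so each alternative is an equality of the $F \otimes -$ images of submodules of $M$ (or an equality with $F \otimes M$). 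Since $(N :_M s^2a) \subseteq (N :_M s^2ab)$, tensoring the associated short exact sequence with $F$ and using the first alternative forces $F \otimes \big( (N :_M s^2ab)/(N :_M s^2a) \big) = 0$, hence $(N :_M s^2ab) = (N :_M s^2a)$ by faithful flatness; the second alternative is handled identically, and $F \otimes \big( M/(N :_M s^3ab) \big) = 0$ in the third alternative forces $(N :_M s^3ab) = M$. Thus $N$ satisfies the criterion of Theorem \ref{l181.4} with this $s$, so $N$ is $S$-2-absorbing.

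The only real content is the faithfully flat descent used in the converse: the identification $(F \otimes N :_{F \otimes M} r) = F \otimes (N :_M r)$ supplied by Lemma \ref{l181.5}, combined with the standard fact that $F \otimes L = 0$ implies $L = 0$ when $F$ is faithfully flat, is exactly what lets one pull the colon equalities and the equality with the whole module back down to $M$. Everything else — verifying the avoidance hypotheses and invoking Theorems \ref{l181.4} and \ref{t181.6} — is routine.
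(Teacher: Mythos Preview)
Your proposal is correct and follows essentially the same approach as the paper: both directions reduce to the colon-submodule characterization of Theorem~\ref{l181.4}, the identification $(F\otimes N:_{F\otimes M} r)=F\otimes(N:_M r)$ from Lemma~\ref{l181.5}, and faithful flatness to descend the equalities (the paper phrases this descent via exactness of $0\to F\otimes A\to F\otimes B\to 0$ rather than $F\otimes(B/A)=0$, but these are equivalent). Your treatment of the avoidance condition via faithfulness of $F\otimes -$ on the endomorphism ``multiplication by $r$'' of $M/N$ is a clean way to match $(N:_RM)\cap S=\emptyset$ with $(F\otimes N:_R F\otimes M)\cap S=\emptyset$.
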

\begin{proof}
Let $N$ be an $S$-2-absorbing submodule of $M$. Suppose $Ann_R(F \otimes N) \cap S \not =\emptyset$. Then there is an $t \in Ann_R(F \otimes N) \cap S$. Thus  $F \otimes tN=0$. Hence,  $0 \rightarrow F \otimes tN \rightarrow 0 $ is an exact sequence. Since $F$ is a faithfully flat, $0 \rightarrow tN \rightarrow 0 $ is an exact which implies that $tN = 0$. Thus  $Ann_R(N) \cap S \not =\emptyset$, this is a contradiction. So $Ann_R(F \otimes N) \cap S =\emptyset$. Now by Theorem \ref{t181.6}, we have $F \otimes N$ is an $S$-2-absorbing submodule of $F \otimes M$.
Conversely, suppose $F \otimes N$ is an $S$-2-absorbing submodule of $F \otimes M$. Then $Ann_R(F \otimes N) \cap S= \emptyset$
implies that $Ann_R(N) \cap S = \emptyset$. Let $a, b \in R$. Then by Theorem \ref{l181.4}, we can assume that $(F \otimes N :_Ms^2ab) = (F \otimes N :_Ms^2a)$. By Lemma \ref{l181.5}, we have
$$
F \otimes (N :_Ms^2ab) = (F \otimes N :_Ms^2ab) = (F \otimes N :_Ms^2a) = F \otimes (N :_Ms^2a).
$$
So, $0 \rightarrow
F \otimes (N : _Ms^2ab) \rightarrow F \otimes (N :_Ms^2a) \rightarrow 0$ is an exact sequence. As $F$ is a faithfully flat, $0 \rightarrow (N : _Ms^2ab) \rightarrow (N :_Ms^2a) \rightarrow 0$ is an exact sequence. Thus $(N :_Ms^2ab) = (N : _Ms^2a)$ and so by Theorem \ref{l181.4}, $N$ is $S$-2-absorbing. If $(F \otimes N :_Ms^3ab) = F \otimes M$, then $F \otimes (N :_Ms^3ab) = (F \otimes N :_Ms^3ab) = F \otimes M$. So,
$$
0 \rightarrow F \otimes (N : _Mabs^3) \rightarrow F \otimes M \rightarrow 0
$$
is an exact sequence. As $F$ is a faithfully flat, $0 \rightarrow (N :_Ms^3ab) \rightarrow M \rightarrow 0$ is an exact sequence. Thus $(N :_Ms^3an) = M$.
Hence $N$ is an $S$-2-absorbing submodule of $M$.
\end{proof}

\begin{prop}\label{t191.15}
Let $S$ be a m.c.s. of $R$ and $N$ be an $S$-2-absorbing submodule of an $R$-module $M$. Then the following statements hold for some $s \in S$.
\begin{itemize}
\item [(a)] $(N:_Mth)\subseteq  (N:_Mts)$ or $(N:_Mth)\subseteq  (N:_Msh)$ for all $t,h \in S$.
\item [(b)] $((N:_RM):_Rth)\subseteq  ((N:_RM):_Rts)$ or $((N:_RM):_Rth)\subseteq  ((N:_RM):_Rsh)$ for all $t,h \in S$.
\end{itemize}
\end{prop}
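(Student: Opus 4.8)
The plan is to take the fixed element $s\in S$ supplied by the definition of $S$-2-absorbing submodule, prove both required containments \emph{pointwise}, and then upgrade the pointwise information to a containment of submodules using the elementary fact that a submodule cannot be contained in the union of two submodules without being contained in one of them. The recurring trick that kills the ``wrong'' alternative in the definition is that a product of elements of $S$ again lies in $S$, so it cannot lie in $(N:_RM)$.

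For part (a), I would fix $t,h\in S$ and take an arbitrary $m\in (N:_M th)$, so that $(t)(h)m\in N$. Applying the $S$-2-absorbing hypothesis with $a=t$ and $b=h$ gives $sth\in (N:_RM)$ or $stm\in N$ or $shm\in N$. The first alternative is impossible, since $s,t,h\in S$ and $S$ is multiplicatively closed force $sth\in S$, while $(N:_RM)\cap S=\emptyset$ by the definition of $S$-2-absorbing. Hence $m\in (N:_M ts)$ or $m\in (N:_M sh)$ (using $st=ts$), which shows $(N:_M th)\subseteq (N:_M ts)\cup (N:_M sh)$. I would then invoke the standard lemma that for submodules $A\subseteq B\cup C$ one has $A\subseteq B$ or $A\subseteq C$ (if $x\in A\setminus B$ and $y\in A\setminus C$, then $x\in C$, $y\in B$, and $x+y\in A$ lies in neither $B$ nor $C$, a contradiction), applied to $A=(N:_M th)$, $B=(N:_M ts)$, $C=(N:_M sh)$, which yields (a).

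Part (b) runs parallel but one level up. Given $r\in ((N:_RM):_R th)$, that is $thrM\subseteq N$, I would note that for every $m\in M$ we have $(t)(h)(rm)\in N$, so the $S$-2-absorbing property gives $sth\in (N:_RM)$ (again impossible) or $st(rm)\in N$ or $sh(rm)\in N$; thus $M=(N:_M\, str)\cup (N:_M\, shr)$, and the union lemma forces $strM\subseteq N$ or $shrM\subseteq N$, i.e.\ $r\in ((N:_RM):_R ts)$ or $r\in ((N:_RM):_R sh)$. This establishes $((N:_RM):_R th)\subseteq ((N:_RM):_R ts)\cup ((N:_RM):_R sh)$, and a final application of the union lemma to these ideals of $R$ finishes (b).

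The only mildly delicate point is the logical order of the argument: the $S$-2-absorbing hypothesis is applied pointwise, and a priori different elements $m$ (or $r$) could fall into different alternatives, so one cannot conclude a single containment directly; the union lemma is exactly what converts this pointwise dichotomy into the desired dichotomy of containments. Everything else — the cancellation of the first alternative via $S$ being multiplicatively closed, and the bookkeeping with the residuals — is routine.
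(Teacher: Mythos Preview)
Your argument is correct and, for part (a), essentially identical to the paper's: the paper spells out your ``union lemma'' in situ by taking $m_1\in (N:_M th)$ with $stm_1\notin N$ and $m_2\in (N:_M th)$ with $shm_2\notin N$ and deriving a contradiction from $m_1+m_2$, which is precisely the proof you sketch in parentheses. For part (b), your two-layer approach (union lemma on $M$ for each fixed $r$, then again on $R$) is valid, but the paper takes a shorter route: it simply notes that (b) follows from (a), since $r\in ((N:_RM):_R th)$ is equivalent to $rM\subseteq (N:_M th)$, so the containment $(N:_M th)\subseteq (N:_M ts)$ (or $\subseteq (N:_M sh)$) already established in (a) immediately yields the corresponding containment of ideals.
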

\begin{proof}
(a) Let $N$ be an $S$-2-absorbing submodule of $M$. Then there is a fixed $s \in S$. Take an element $m\in (N:_Mth)$, where $t,h \in S$. Then $stm \in N$ or $shm \in N$ or $sth \in (N:_RM)$. As $(N:_RM)\cap S=\emptyset$, we have  $sth \not\in (N:_RM)$.
If for each $m\in (N:_Mth)$, we have $stm \in N$ (resp. $shm \in N$), then we are done. So suppose that there are $m_1\in (N:_Mth)$ such that $stm_1 \not\in N$ and $m_2\in (N:_Mth)$ such that $shm_2\not \in N$. Then we conclude that  $shm_1 \in N$ and $stm_2 \in N$.
Now $ht(m_1+m_2)\in N$ implies that $hs(m_1+m_2)\in N$ or $st(m_1+m_2)\in N$. Thus $stm_1 \in N$ or $shm_2\in N$, which is a desired contradiction.

(b) This follows from part (a).
\end{proof}

\begin{lem}\label{p9.11}
Let $S$ be a m.c.s. of $R$ and $I$ be an $S$-2-absorbing ideal of $R$. Then $\sqrt{I}$
is an $S$-2-absorbing ideal of $R$ and there is a fixed $s \in S$ such that $sa^2 \in I$ for every $a\in\sqrt{I}$.
\end{lem}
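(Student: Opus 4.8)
The plan is to treat the two assertions separately. As a preliminary, note that $\sqrt I\cap S=\emptyset$: if $u\in\sqrt I\cap S$ then $u^{k}\in I$ for some $k\ge 1$, and $u^{k}\in S$ since $S$ is multiplicatively closed, contradicting $I\cap S=\emptyset$. Also $(\sqrt I:_R R)=\sqrt I$, so $\sqrt I$ is eligible to be an $S$-$2$-absorbing ideal. Fix an element $s\in S$ witnessing that $I$ is $S$-$2$-absorbing.

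For the first assertion I would show that the same $s$ works for $\sqrt I$. Let $a,b,c\in R$ with $abc\in\sqrt I$, so $a^{n}b^{n}c^{n}=(abc)^{n}\in I$ for some $n\ge 1$. Applying the $S$-$2$-absorbing property of $I$ to the triple $a^{n},b^{n},c^{n}$ gives $sa^{n}b^{n}\in I$ or $sa^{n}c^{n}\in I$ or $sb^{n}c^{n}\in I$; say the first holds. Then $\bigl(s\,ab\bigr)^{n}=s^{n}a^{n}b^{n}=s^{\,n-1}\,(sa^{n}b^{n})\in I$ because $I$ is an ideal, hence $s\,ab\in\sqrt I$. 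The remaining two cases are symmetric, so $\sqrt I$ is $S$-$2$-absorbing with fixed element $s$.

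For the second assertion I would induct on the least integer $n\ge 1$ with $a^{n}\in I$, proving the (a priori weaker) statement that $s^{\,m}a^{2}\in I$ for a suitable $m=m(n)$. If $n\le 2$ then $a^{2}\in I$. If $n\ge 3$, write $a^{n}=a\cdot a\cdot a^{\,n-2}\in I$ and invoke the $S$-$2$-absorbing property of $I$: either $sa^{2}\in I$, in which case we are done, or $sa^{\,n-1}\in I$. In the latter case $(sa)^{n-1}=s^{\,n-2}\,(sa^{\,n-1})\in I$, so $sa$ lies in $\sqrt I$ with nilpotency index at most $n-1$, and the inductive hypothesis applied to $sa$ produces $s^{\,m(n-1)}(sa)^{2}=s^{\,m(n-1)+2}a^{2}\in I$; taking $m(n)=2n-1$ keeps the recursion consistent. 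Thus for every $a\in\sqrt I$ some power of $s$ times $a^{2}$ lies in $I$, and (together with the first assertion) the qualitative shape of the lemma is in place.

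The main obstacle is upgrading ``a power $s^{m(a)}$ depending on $a$'' to a single fixed $s$ valid for all $a\in\sqrt I$, which is what the statement literally requires. I would attack this by trying to show that the extra factors of $s$ accumulated in the reduction can always be removed: from $s^{2}a^{2}=s^{2}\cdot a\cdot a\in I$ the $S$-$2$-absorbing property gives $sa^{2}\in I$ or $s^{3}a\in I$, and then analysing the alternative $s^{k}a\in I$ (again feeding it into the $S$-$2$-absorbing property, and using $(\sqrt I:_R R)\cap S=\emptyset$ to rule out $s^{k}\in I$) in an attempt to force the chain back to $sa^{2}\in I$. Making this collapse argument watertight is where the real work lies; the radical and ideal-quotient manipulations and the symmetry among the three cases in the earlier steps are routine.
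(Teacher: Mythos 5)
Your proof of the first assertion (that $\sqrt I$ is $S$-$2$-absorbing) is correct and takes a genuinely different route from the paper's. The paper derives this assertion \emph{from} the second one: starting from $s(abc)^2=sa^2b^2c^2\in I$ it extracts, say, $s^2a^2b^2=(sab)^2\in I$ and concludes $sab\in\sqrt I$. You instead argue directly from the definition, applying the $S$-$2$-absorbing property to $a^nb^nc^n=(abc)^n\in I$ and absorbing the witness via $(sab)^n=s^{n-1}(sa^nb^n)\in I$. Your version is self-contained, keeps the same witness $s$, and includes the check that $\sqrt I\cap S=\emptyset$ (which the paper omits but which is needed for $\sqrt I$ to be eligible as an $S$-$2$-absorbing ideal). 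Crucially, it does not lean on the second assertion at all; since the only places the lemma is invoked later (the results on $rad(N)$ and on $sec(N)$) use just the first assertion, your argument secures everything the paper actually needs from this lemma.

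The second assertion is where your proposal has a genuine gap, which you candidly flag. The descent from $a\cdot a\cdot a^{n-2}\in I$ only yields $s^{m}a^2\in I$ with $m$ growing with the least $n$ for which $a^n\in I$, and your attempted collapse does not terminate: $s^2a^2=s^2\cdot a\cdot a\in I$ forces $sa^2\in I$ or $s^{3}a\in I$, but feeding $s^{k}a\in I$ back into the definition can simply return the branch $s^{k}a\in I$ again, so no progress is guaranteed. What you actually prove is that for each $a\in\sqrt I$ \emph{some} power of $s$ depending on $a$ multiplies $a^2$ into $I$, which is strictly weaker than the uniform statement claimed. You should know, however, that the paper's own proof of this assertion consists of the single word ``Clearly'' and supplies no argument; in the classical case $S=\{1\}$ the minimality of $n$ immediately kills the branch $a^{n-1}\in I$ and the induction closes in one step, but the extra factor of $s$ destroys that minimality argument, so the uniform claim genuinely requires a justification that neither you nor the paper provides.
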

\begin{proof}
Clearly, as $I$ is an $S$-2-absorbing ideal of $R$, there is a fixed $s \in S$ such that $sa^2 \in I$ for every $a\in\sqrt{I}$. Now let
$a,b,c \in R$ such that $abc\in\sqrt{I}$. Then  $sa^2b^2c^2=s(abc)^2\in I$. Since $I$ is a $S$-2-absorbing ideal of $R$, we may assume that $s^2a^2b^2\in I$. This implies that  $sab\in\sqrt{I}$, as needed.
\end{proof}

Recall that an $R$-module $M$ is said to be a
\emph{multiplication module} if for every submodule $N$ of $M$
there exists an ideal $I$ of $R$ such that $N=IM$ \cite{Ba81}.

Let $N$ be a proper submodule of an $R$-module $M$. Then the \textit{$M$-radical} of
$N$, denoted by $rad(N)$, is defined to be the intersection of all prime submodules of
$M$ containing $N$ \cite{MM86}.
\begin{thm}\label{t181.3}
Let $S$ be a m.c.s. of $R$ and $M$ a finitely generated multiplication $R$-module.
If $N$ is an $S$-2-absorbing submodule of $M$, then $rad (N)$ is an $S$-2-absorbing submodule
of $M$.
\end{thm}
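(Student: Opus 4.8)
The plan is to reduce the statement about $rad(N)$ in the module $M$ to the corresponding statement about radicals of ideals in $R$, using the dictionary between submodules of a finitely generated multiplication module and ideals of $R$. First I would recall the standard facts for a finitely generated multiplication $R$-module $M$: every submodule $N$ has the form $N = IM$ for the ideal $I = (N :_R M)$, the assignment $I \mapsto IM$ respects inclusions and radicals in the sense that $rad(IM) = \sqrt{I\,}M$ (this is a known result of El-Bast and Smith), and $(JM :_R M) = \sqrt{J}$ whenever... more precisely $((IM):_R M) = I$ for such modules when $I \supseteq Ann_R(M)$, and in particular $(rad(N) :_R M) = \sqrt{(N:_R M)}$. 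I would also use that for a finitely generated multiplication module, $N$ being $S$-$2$-absorbing in $M$ is equivalent to $(N :_R M)$ being an $S$-$2$-absorbing ideal of $R$; this equivalence either is in \cite{uatk20} or follows quickly from the definition together with the fact that $rm \in IM \iff r \in (IM :_R M :_R m)$-type cancellation available in multiplication modules.

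With that setup the argument is short. Write $I = (N :_R M)$, so $N = IM$ and $I \cap S = \emptyset$. By the equivalence above, $I$ is an $S$-$2$-absorbing ideal of $R$. By Lemma \ref{p9.11}, $\sqrt{I}$ is an $S$-$2$-absorbing ideal of $R$. Now $rad(N) = \sqrt{I}M$ by the El-Bast--Smith radical formula, and $(rad(N) :_R M) = \sqrt{I}$, which still satisfies $\sqrt{I} \cap S = \emptyset$ (since $S$ is multiplicatively closed, $s^n \in S$ for all $n$, so a power of $s$ lying in $I$ would already meet $I \cap S$). Applying the equivalence in the reverse direction to the $S$-$2$-absorbing ideal $\sqrt{I} = (rad(N) :_R M)$ gives that $rad(N) = \sqrt{I}M$ is an $S$-$2$-absorbing submodule of $M$, which is the claim. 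One should also double check the fixed element $s$: Lemma \ref{p9.11} produces an $s \in S$ witnessing $S$-$2$-absorbingness of $\sqrt{I}$, and tracing through the ideal-to-module equivalence yields a fixed witness $s$ (or a power of it) for $rad(N)$.

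The main obstacle I anticipate is justifying the two technical inputs cleanly within the paper's framework: (i) the radical formula $rad(IM) = \sqrt{I}M$ for finitely generated multiplication modules, and (ii) the equivalence ``$N$ is $S$-$2$-absorbing in $M$'' $\iff$ ``$(N :_R M)$ is an $S$-$2$-absorbing ideal of $R$'' for such $M$. Item (i) is classical (El-Bast--Smith) and can simply be cited. For item (ii), the forward direction is immediate from the definition by taking $m$ to range so that $abm \in N = IM$ captures membership in $I$; the reverse direction uses that in a finitely generated multiplication module $abm \in IM$ forces, after multiplying by a suitable idempotent-type ideal or by the cancellation $Ann_R(M) \subseteq I$, the element $ab\langle m\rangle \subseteq IM$ to be controlled by $ab \cdot ((N:_R M):_R m)$. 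If this equivalence is not already available, I would prove it as a preliminary lemma; it is routine multiplication-module bookkeeping but needs to be stated to make the reduction rigorous. Everything else is just transporting the fixed $s$ through these equivalences, which is bookkeeping.
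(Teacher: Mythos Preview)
Your proposal is correct and follows essentially the same route as the paper: reduce to the ideal $(N:_R M)$ via the equivalence from \cite{uatk20} (Proposition~3 there), apply Lemma~\ref{p9.11} to pass to $\sqrt{(N:_R M)}$, identify this with $(rad(N):_R M)$ via the radical formula for finitely generated multiplication modules, and then apply the equivalence in reverse. The only cosmetic differences are that the paper cites McCasland--Moore \cite{MM86} rather than El-Bast--Smith for the radical identity, and that you explicitly check $\sqrt{I}\cap S=\emptyset$, a point the paper leaves implicit.
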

\begin{proof}
Since $N$ is an $S$-2-absorbing submodule of $M$, we have $(N:_R M)$ is a
$S$-2-absorbing ideal of $R$ by \cite[Proposition 3]{uatk20}. Thus by Lemma \ref{p9.11}, $\sqrt{(N :_R M)}$ is an $S$-2-absorbing ideal of $R$. By \cite[Theorem 4]{MM86}, $(rad(N):_R M))=\sqrt{(N :_R M)}$.  Therefore, $(rad (N):_R M)$ is an $S$-2-absorbing ideal of $R$. Now the result follows from \cite[Proposition 3]{uatk20}.
\end{proof}
% ----------------------------------------------------------------
% ----------------------------------------------------------------
% ----------------------------------------------------------------
% ----------------------------------------------------------------
% ----------------------------------------------------------------
% ----------------------------------------------------------------
% ----------------------------------------------------------------
% ----------------------------------------------------------------
\section{$S$-2-absorbing second submodules}
\begin{defn}\label{d2.1}
Let $S$ be a m.c.s. of $R$ and $N$ be a submodule of an $R$-module $M$ such that $Ann_R(N) \cap S= \emptyset$.
We say that $N$ is a \textit{$S$-2-absorbing second submodule of $M$} if  there exists a fixed $s \in S$ and whenever $abN\subseteq K$, where $a, b \in R$ and $K$ is a submodule of $M$, implies either that $saN\subseteq K$ or $sbN\subseteq K$ or $sabN=0$. In particular, an ideal $I$ of $R$ is said to be an \textit{$S$-2-absorbing second ideal} if $I$ is an $S$-2-absorbing second submodule of the $R$-module $R$. By a \textit{$S$-2-absorbing second module}, we mean
a module which is a $S$-2-absorbing second submodule of itself.
\end{defn}

\begin{lem}\label{l1.3}
Let $S$ be a m.c.s. of $R$, $I$ an ideal of $R$, and let $N$ be an $S$-2-absorbing second submodule of $M$. Then there exists a fixed $s \in S$ and whenever
 $a\in R$, $K$ is a submodule of $M$, and $IaN \subseteq K$, then  $asN \subseteq K$ or $IsN \subseteq K$ or $Ias \in Ann_R(N)$.
\end{lem}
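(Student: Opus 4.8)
The plan is to argue by contradiction, using the \emph{same} fixed $s\in S$ that witnesses $N$ being an $S$-2-absorbing second submodule, and then to invoke the elementary fact — already used in the proof of Theorem~\ref{l181.4} — that the additive group of an ideal cannot be the union of two of its proper subgroups.

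First I would fix $s\in S$ as in Definition~\ref{d2.1}, take $a\in R$, a submodule $K$ of $M$, and an ideal $I$ with $IaN\subseteq K$, and assume toward a contradiction that $asN\not\subseteq K$ and $IasN\neq 0$; the task is then to deduce $IsN\subseteq K$. For every $b\in I$ we have $abN\subseteq IaN\subseteq K$, so the defining property of $N$ gives $saN\subseteq K$, or $sbN\subseteq K$, or $sabN=0$. Since $saN=asN\not\subseteq K$ by assumption, every $b\in I$ must satisfy $sbN\subseteq K$ or $sabN=0$.

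Next I would set $I_1=\{\,b\in I:\ sbN\subseteq K\,\}$ and $I_2=\{\,b\in I:\ sabN=0\,\}$. A routine check shows each is an $R$-submodule of $I$ — indeed $I_1=I\cap(K:_R sN)$ and $I_2=I\cap Ann_R(saN)$ — and the previous step says $I=I_1\cup I_2$. Hence $I=I_1$ or $I=I_2$. If $I=I_2$, then $saN$ is annihilated by all of $I$, i.e. $IasN=saIN=0$, contradicting the standing assumption; therefore $I=I_1$, which is precisely $IsN\subseteq K$. Since the conclusion of the lemma, namely ``$asN\subseteq K$ or $IsN\subseteq K$ or $Ias\subseteq Ann_R(N)$'', is exactly the assertion that the two assumptions $asN\not\subseteq K$ and $IasN\neq 0$ cannot both fail to produce $IsN\subseteq K$, this finishes the argument, with the same $s$ as in Definition~\ref{d2.1} (no power of $s$ is needed).

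I expect the only point requiring any care to be the bookkeeping around $I_1$ and $I_2$: one must verify that they really are subgroups of $(I,+)$ so that the ``union of two subgroups'' dichotomy applies, and that the alternative $I=I_2$ genuinely yields $Ias\subseteq Ann_R(N)$ rather than something weaker. Everything else is a direct transcription of the $S$-2-absorbing second condition applied to the pairs $(a,b)$ with $b$ ranging over $I$; in particular, unlike several earlier results in the paper, no localization or flatness input enters here.
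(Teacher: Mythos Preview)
Your argument is correct and is essentially the same as the paper's: both assume $asN\not\subseteq K$ and $IasN\neq 0$ and deduce $IsN\subseteq K$ via additivity in $I$. The only cosmetic difference is that you package the step as the dichotomy $I=I_1\cup I_2\Rightarrow I=I_1$ or $I=I_2$, whereas the paper carries out the same idea element-by-element by fixing a $b\in I$ with $sabN\neq 0$ and analyzing $(b+c)aN\subseteq K$ for arbitrary $c\in I$.
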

\begin{proof}
Let $asN \not \subseteq K$ and $Ias  \not \in Ann_R(N)$.
Then there exists $b \in I$ such that $abs N \not = 0$. Now as $N$
is a $S$-2-absorbing second submodule of $M$, $baN \subseteq K$ implies that $bsN \subseteq K$. We show that $IsN \subseteq K$. To see this, let $c$ be an arbitrary element of $I$.
Then $(b + c)aN \subseteq K$. Hence, either $(b + c)sN \subseteq K$ or $(b + c)as  \in Ann_R(N)$. If $(b+c)sN \subseteq K$, then since $bsN \subseteq K$ we have $csN \subseteq K$. If $(b+c)as  \in Ann_R(N)$, then $cas  \not \in Ann_R(N)$, but $ca N \subseteq K$. Thus $csN \subseteq K$. So, we conclude that
$sIN \subseteq K$, as requested.
\end{proof}

\begin{lem}\label{l1.4}
Let $S$ be a m.c.s. of $R$, $I$ and $J$ be two ideals of $R$, let and $N$ be a $S$-2-absorbing second submodule of $M$.
Then there exists a fixed $s \in S$ and whenever $K$ is a submodule of $M$ and $IJN \subseteq K$,
then $sIN \subseteq K$ or $sJN \subseteq K$ or $IJs  \subseteq Ann_R(N)$.
\end{lem}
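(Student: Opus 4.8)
\emph{Proof proposal.} The plan is to reduce this to Lemma \ref{l1.3} by letting the ideal $J$ play the role of the ideal in that lemma while letting the element range over all of $I$, and then to invoke the elementary fact that an additive subgroup contained in the union of two additive subgroups lies inside one of them. Throughout I would work with the single fixed $s\in S$ supplied by Lemma \ref{l1.3}, which is just the fixed $s$ witnessing that $N$ is $S$-2-absorbing second, so no issue about choosing several $s$'s arises.

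First I would assume $IJN\subseteq K$ and dispose of the easy alternative: if $sJN\subseteq K$ there is nothing to prove, so suppose $sJN\not\subseteq K$. For each $a\in I$ we have $JaN=aJN\subseteq IJN\subseteq K$ (since $a\in I$ forces $aJ\subseteq IJ$), so Lemma \ref{l1.3}, applied to the ideal $J$, the element $a$, and the submodule $K$, yields $saN\subseteq K$ or $sJN\subseteq K$ or $Jas\subseteq Ann_R(N)$. Because the middle possibility has been excluded, for \emph{every} $a\in I$ either $saN\subseteq K$ or $Jas\subseteq Ann_R(N)$.

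Next I would set $A=\{a\in I: saN\subseteq K\}$ and $B=\{a\in I: Jas\subseteq Ann_R(N)\}$. A routine check, using that $K$ is a submodule and $Ann_R(N)$ an ideal (hence both closed under addition and under multiplication by elements of $R$) and that $I$ is an ideal, shows that $A$ and $B$ are ideals of $R$ contained in $I$; and by the previous paragraph $I=A\cup B$. Since an ideal contained in a union of two ideals is contained in one of them, $I=A$ or $I=B$. In the first case $sIN\subseteq K$, and in the second case $IJs=JIs\subseteq Ann_R(N)$, and in either case we have reached one of the three asserted conclusions (with the same $s$).

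The only genuinely delicate point is that distinct elements $a\in I$ may realize different alternatives of Lemma \ref{l1.3}, so one cannot argue with a single generic $a$; this is precisely what the partition $I=A\cup B$ together with the union-of-two-ideals lemma is designed to handle. Everything else — that $aJN=JaN\subseteq IJN$, and that $A$ and $B$ are ideals — is routine verification that I would only sketch.
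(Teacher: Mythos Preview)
Your argument is correct, and it is in fact tidier than the paper's. Both proofs invoke Lemma~\ref{l1.3} with the same fixed $s$, but they diverge in how they pass from one ideal to two. The paper assumes simultaneously $sIN\not\subseteq K$ and $sJN\not\subseteq K$, picks witnesses $a\in I$, $b\in J$ with $saN\not\subseteq K$, $sbN\not\subseteq K$, and then for arbitrary $c\in I$, $d\in J$ expands $(a+c)(b+d)N\subseteq K$ and runs a three-case analysis to force $scd\in Ann_R(N)$. You instead fix only $sJN\not\subseteq K$, apply Lemma~\ref{l1.3} to every $a\in I$, and package the resulting dichotomy as $I=A\cup B$ with $A=\{a\in I:saN\subseteq K\}$ and $B=\{a\in I:sJa\subseteq Ann_R(N)\}$, then appeal to the standard fact that an abelian group covered by two subgroups equals one of them. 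Your route is shorter and more structural; the paper's $(a+c)(b+d)$ trick is essentially an ad hoc, two-variable unrolling of that same covering lemma, so nothing is lost by replacing it with the clean one-variable version you give.
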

\begin{proof}
Let $IsN \not \subseteq K$ and $JsN \not \subseteq K$. We show that $IJs  \subseteq Ann_R(N)$. Assume that $c \in I$ and $d\in J$. By assumption there exists $a\in I$ such that
$asN \not \subseteq K$ but $aJN \subseteq K$. Now Lemma \ref{l1.4}
shows that $aJs  \subseteq Ann_R(N)$ and so $(I\setminus (K:_RN))Js \subseteq Ann_R(N)$.
Similarly there exists $b \in (J\setminus (K:_RN))$ such that $Ibs  \subseteq Ann_R(N)$ and also $I(J\setminus (K:_RN))s \subseteq Ann_R(N)$. Thus we have $abs  \in Ann_R(N)$,
$ads  \in Ann_R(N)$ and $cbs  \in Ann_R(N)$. As $a + c \in I$
and $b + d \in J$, we have $(a + c)(b + d)N \subseteq K$.
Therefore, $(a + c)sN \subseteq K$ or $(b + d)sN \subseteq K$ or $(a + c)(b + d) s \in Ann_R(N)$. If $(a + c)sN \subseteq K$, then $csN \not \subseteq K$. Hence $c \in  I\setminus (K:_RN)$
which implies that $cd s \in Ann_R(N)$. Similarly if $(b + d)sN \subseteq K$, we can deduce
that $cds  \in Ann_R(N)$. Finally if $(a+c)(b+d)s  \in Ann_R(N)$, then $(ab+ad+cb+cd)s  \in Ann_R(N)$
so that $cds  \in Ann_R(N)$. Therefore, $IJs  \subseteq Ann_R(N)$.
\end{proof}

Let $M$ be an $R$-module. A proper submodule $N$ of
$M$ is said to be \emph{completely irreducible} if $N=\bigcap _
{i \in I}N_i$, where $ \{ N_i \}_{i \in I}$ is a family of
submodules of $M$, implies that $N=N_i$ for some $i \in I$. It is
easy to see that every submodule of $M$ is an intersection of
completely irreducible submodules of $M$. Thus the intersection
of all completely irreducible submodules of $M$ is zero. \cite{FHo06}.

\begin{rem}\label{r2.1}
Let $N$ and $K$ be two submodules of an $R$-module $M$. To prove $N\subseteq K$, it is enough to show that if $L$ is a completely irreducible submodule of $M$ such that $K\subseteq L$, then $N\subseteq L$  \cite{AF101}.
\end{rem}

Let $S$ be a m.c.s. of $R$ and $M$ be an $R$-module.  $S$-2-absorbing second submodules of $M$ can be characterized in various ways as we
demonstrate in the following theorem.
\begin{thm}\label{t2.3} Let $S$ be a m.c.s. of $R$.  For a submodule $N$ of an $R$-module $M$  with $Ann_R(N) \cap S= \emptyset$ the following statements are equivalent:
\begin{itemize}
 \item [(a)] $N$ is an $S$-2-absorbing second submodule of $M$;
 \item [(b)]  There exists a fixed $s \in S$ such that $s^2abN=s^2aN$ or $s^2abN=s^2bN$ or $s^3abN=0$ for each $a, b \in R$;
 \item [(c)] There exists a fixed $s \in S$ and whenever $abN\subseteq L_1 \cap L_2$, where $a, b \in R$ and $L_1 , L_2$ are
completely irreducible submodules of $M$, implies either that $abs N=0$
 or $saN\subseteq L_1\cap L_2$ or $sbN\subseteq L_1\cap L_2$.
 \item [(d)] There exists a fixed $s \in S$, and $IJN\subseteq K$ implies either that $s IJ \subseteq Ann_R(N)$ or $sIN \subseteq K$ or $sJN \subseteq K$ for each ideals $I, J$ of $R$ and submodule $K$ of $M$.
\end{itemize}
\end{thm}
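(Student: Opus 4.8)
The plan is to establish the three equivalences (a)$\Leftrightarrow$(b), (a)$\Leftrightarrow$(c), and (a)$\Leftrightarrow$(d) separately, all anchored at (a). Two of these are essentially reformulations reachable with machinery already in hand: (a)$\Rightarrow$(c) is just the definition applied to the submodule $L_1\cap L_2$, (c)$\Rightarrow$(a) is a standard completely-irreducible argument via Remark \ref{r2.1}, and (a)$\Rightarrow$(d) is exactly Lemma \ref{l1.4} (noting $IJs\subseteq Ann_R(N)$ and $sIJ\subseteq Ann_R(N)$ say the same thing), while (d)$\Rightarrow$(a) is specialization to principal ideals. The only implication that needs a genuine idea is (a)$\Rightarrow$(b).

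For (a)$\Rightarrow$(b), fix the $s\in S$ given by (a); given $a,b\in R$, apply the defining property to the containment $(sa)(sb)N\subseteq K$ with $K:=s^2abN$ (a submodule, and $(sa)(sb)N=s^2abN=K$). This returns one of $s(sa)N\subseteq s^2abN$, $s(sb)N\subseteq s^2abN$, $s(sa)(sb)N=0$. Now $abN\subseteq aN$ and $abN\subseteq bN$ always force $s^2abN\subseteq s^2aN$ and $s^2abN\subseteq s^2bN$, so the first two options sharpen to the equalities $s^2abN=s^2aN$ and $s^2abN=s^2bN$, while the third is $s^3abN=0$; thus (b) holds with the same $s$. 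For (b)$\Rightarrow$(a), set $s':=s^3$ and suppose $abN\subseteq K$. Since $K$ is a submodule, $s^2abN\subseteq s^2K\subseteq K$. If $s^2abN=s^2aN$ then $s'aN=s\cdot s^2aN\subseteq s^2aN=s^2abN\subseteq K$; symmetrically if $s^2abN=s^2bN$; and if $s^3abN=0$ then $s'abN=0$. Hence (a) holds with $s'$.

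For (a)$\Leftrightarrow$(c): the forward direction is immediate as noted. For (c)$\Rightarrow$(a), fix the $s$ from (c), suppose $abN\subseteq K$ with $sabN\neq 0$ and $saN\not\subseteq K$, and show $sbN\subseteq K$. By Remark \ref{r2.1} it suffices to fix an arbitrary completely irreducible $L_2\supseteq K$ and prove $sbN\subseteq L_2$. Since $saN\not\subseteq K$ and $K$ is the intersection of the completely irreducible submodules containing it, pick a completely irreducible $L_1\supseteq K$ with $saN\not\subseteq L_1$. Then $abN\subseteq K\subseteq L_1\cap L_2$, so (c) yields $sabN=0$, or $saN\subseteq L_1\cap L_2\subseteq L_1$, or $sbN\subseteq L_1\cap L_2\subseteq L_2$; the first two are excluded, so $sbN\subseteq L_2$, and $L_2$ was arbitrary.

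For (a)$\Leftrightarrow$(d): the forward direction is Lemma \ref{l1.4} (whose fixed $s$ is the one from Definition \ref{d2.1}, independent of $I,J,K$). For (d)$\Rightarrow$(a), take $I=Ra$ and $J=Rb$; then $IJ=Rab$, $IJN=abN$, $sIN=saN$, $sJN=sbN$, and $sIJ\subseteq Ann_R(N)$ is equivalent to $sabN=0$, so (d) applied to $abN\subseteq K$ gives precisely one of $sabN=0$, $saN\subseteq K$, $sbN\subseteq K$, i.e. (a) with the same $s$. None of these steps is deep; the step demanding the most care is (a)$\Rightarrow$(b), where one must feed exactly $a'=sa$, $b'=sb$, $K=s^2abN$ into the definition so that the unconditional inclusions $s^2abN\subseteq s^2aN$ and $s^2abN\subseteq s^2bN$ can be paired with its output to yield honest equalities rather than mere inclusions, and one must keep track of which power of $s$ survives in each direction.
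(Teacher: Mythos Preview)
Your proof is correct. The paper organizes the implications as a cycle $(b)\Rightarrow(a)\Rightarrow(c)\Rightarrow(b)$ together with $(a)\Leftrightarrow(d)$, whereas you use $(a)$ as a hub and prove each of $(a)\Leftrightarrow(b)$, $(a)\Leftrightarrow(c)$, $(a)\Leftrightarrow(d)$ directly. The substantive difference is in how $(b)$ is reached: the paper deduces $(b)$ from $(c)$ by choosing completely irreducible $L_1,L_2$ via Remark~\ref{r2.1} that separate $s^2aN$ and $s^2bN$ from $s^2abN$, then applying $(c)$ to $(sa)(sb)N\subseteq L_1\cap L_2$; you instead obtain $(b)$ straight from $(a)$ by feeding $a'=sa$, $b'=sb$, $K=s^2abN$ into the definition, which is more direct and avoids completely irreducible submodules for that step. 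Conversely, your $(c)\Rightarrow(a)$ is a direct argument the paper does not need (it closes the loop via $(c)\Rightarrow(b)\Rightarrow(a)$). The remaining pieces---$(b)\Rightarrow(a)$ with $s'=s^3$, $(a)\Rightarrow(d)$ via Lemma~\ref{l1.4}, and $(d)\Rightarrow(a)$ by specializing to principal ideals---are identical in both write-ups. Your hub-and-spoke layout is slightly longer but makes each equivalence self-contained, while the paper's cyclic layout is shorter but couples the implications.
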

\begin{proof}
$(b) \Rightarrow (a)$
Let $a, b \in R$ and $K$ be a submodule of $M$ with $abN \subseteq K$. By part (b), there exists a fixed $s \in S$ such that $s^2abN=s^2aN$ or $s^2abN=s^2bN$ or $s^3abN=0$. Thus either $s^3abN=0$ or $s^3aN\subseteq s^2aN=abs^2N \subseteq s^2K\subseteq K$ or  $s^3bN\subseteq s^2bN=abs^2N \subseteq s^2K\subseteq K$. Therefore, by setting $\acute{s}:=s^3$, we have
 either $\acute{s}aN\subseteq K$ or $\acute{s}bN\subseteq K$ or $\acute{s }abN=0$, as needed.

$(a) \Rightarrow (c)$ This is clear.

$(c) \Rightarrow (b)$
By part (c), there exists a fixed $s \in S$. Assume that there are $a, b \in R$ such that
$s^2 abN\not=s^2 aN$ and $s^2abN\not=s^2bN$. Then there exist completely irreducible submodules $L_1$ and $L_2$ of $M$ such that $s^2abN \subseteq L_1$, $s^2abN \subseteq L_2$, $s^2aN \not \subseteq L_1$, and $s^2bN \not \subseteq L_2$ by Remark \ref{r2.1}.
Now $(as)(bs)N=s^2abN \subseteq L_1 \cap L_2$ implies that either $s^2aN \subseteq L_1 \cap L_2$ or $s^2bN \subseteq L_1 \cap L_2$ or $s^3abN=0$ by part (c). If  $s^2aN \subseteq L_1 \cap L_2$ or $s^2bN \subseteq L_1 \cap L_2$, then  $s^2aN \subseteq L_1 $ or $s^2bN \subseteq L_2$, which are contradiction. Thus $s^3abN=0$, as required.

$(a)\Rightarrow (d)$
By Lemma \ref{l1.4}.

$(d)\Rightarrow (a)$
Take $a,b \in R$ and $K$ a submodule of $M$ with $abN \subseteq K$. Now, put $I=Ra$ and $J = Rb$. Then we have $IJN \subseteq K$. By assumption, there is a fixed $s \in S$ such that either $sIJ = s(Ra)(Rb) \subseteq Ann_R(N)$ or $sIN \subseteq K$ or
$sJN \subseteq K$ and so either $sab \in Ann_R(N)$ or $saN \subseteq K$ or $sbN \subseteq K$, as needed.
\end{proof}

\begin{rem}\label{r2.1}
Let $M$ be an $R$-module and $S$ a m.c.s. of $R$. Clearly, every $S$-second submodule of $M$ and every strongly 2-absorbing second submodule $N$ of $M$ with $Ann_R(N) \cap S=\emptyset$ is an $S$-2-absorbing second submodule of $M$. َBut the converse is not true in general, as we can see in the following examples.
\end{rem}

\begin{ex}\label{e24.1}
Consider $\Bbb Z_4$ as an $\Bbb Z$-module. Clearly, $\Bbb Z_4$ is not a strongly 2-absorbing second  $\Bbb Z$-module. Set $S:=\Bbb Z \setminus 2\Bbb Z$. Then for each $s \in S$, $2\Bbb Z_4=2s\Bbb Z_4 \not=s\Bbb Z_4=\Bbb Z_4$ and $2s\Bbb Z_4 \not=0$ implies that  $\Bbb Z_4$ is not an  $S$-second $\Bbb Z$-module. But, if we consider $s=1$, and $n, m \in \Bbb Z$, then
we have three cases:
\begin{description}
\item[Case I] If $n \not =2k$ and $m \not =2k$ for each $k \in \Bbb N$, then
 $$
 nm(1)^2 \Bbb Z_4=\Bbb Z_4=(1)^2n\Bbb Z_4=(1)^2m\Bbb Z_4.
$$
\item[Case II] If $n =2k_1$ and $m =2k_2$ for some $k_1, k_2 \in \Bbb N$, then
 $nm(1)^3\Bbb Z_4=0$.
\item[Case III] If  $n =2k_1$ for some $k_1\in \Bbb N$ and $m \not =2k$ for each $k \in \Bbb N$, then
 $$
 nm(1)^2 \Bbb Z_4=\bar{2}\Bbb Z_4=(1)^2n\Bbb Z_4.
 $$
\end{description}
Thus by Theorem \ref{t2.3} $(b)\Rightarrow (a)$, $\Bbb Z_4$ is an $S$-2-absorbing second $\Bbb Z$-module.
\end{ex}

\begin{ex}
Consider the $\Bbb Z$-module $M=\Bbb Z_{p^\infty}\oplus \Bbb Z_{pq}$, where $p\not=q$ are prime
numbers. Then $M$ is not a strongly 2-absorbing second  $\Bbb Z$-module since
$$
pqM=\Bbb Z_{p^\infty}\oplus 0 \not =\Bbb Z_{p^\infty}\oplus p\Bbb Z_{pq}=pM,
$$
$$
pqM=\Bbb Z_{p^\infty}\oplus 0 \not =\Bbb Z_{p^\infty}\oplus q\Bbb Z_{pq}=qM,
$$ and $pqM=\Bbb Z_{p^\infty}\oplus 0\not =0$. Now, take the m.c.s. $S =\Bbb Z\setminus \{0\}$ and put $s=pq$. Then
$s^2pqM =\Bbb Z_{p^\infty}\oplus 0=s^2pM$ implies that
$M$ is an $S$-2-absorbing second $\Bbb Z$-module by Theorem \ref{t2.3} $(b)\Rightarrow (a)$.
\end{ex}

The following lemma is known, but we write it here for the sake of reference.
\begin{lem}\label{l2.1}
Let $M$ be an $R$-module, $S$ a m.c.s. of $R$,
and $N$ be a finitely generated submodule of $M$. If $S^{-1}N \subseteq  S^{-1}K$ for a submodule
$K$ of $M$, then there exists an $s \in S$ such that $sN \subseteq K$.
\end{lem}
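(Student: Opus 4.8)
The plan is to exploit the hypothesis that $N$ is finitely generated in order to reduce the inclusion $S^{-1}N\subseteq S^{-1}K$ to finitely many membership statements, one for each generator, and then to clear denominators and combine the resulting elements of $S$ into a single one.

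Write $N=Rx_1+\cdots+Rx_n$. For each $i$, the element $x_i/1$ lies in $S^{-1}K$, so there are $k_i\in K$ and $t_i\in S$ with $x_i/1=k_i/t_i$ in $S^{-1}M$. By the definition of equality in a localization, this means there is $u_i\in S$ with $u_i(t_ix_i-k_i)=0$ in $M$, hence $u_it_ix_i=u_ik_i\in K$. Putting $s_i:=u_it_i\in S$ we obtain $s_ix_i\in K$ for each $i$.

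Now set $s:=s_1s_2\cdots s_n$, which lies in $S$ because $S$ is multiplicatively closed. For each $i$ we may write $s=s_i\cdot\bigl(\prod_{j\neq i}s_j\bigr)$, and since $s_ix_i\in K$ and $K$ is a submodule, we get $sx_i\in K$. Consequently $sN=s(Rx_1+\cdots+Rx_n)=Rsx_1+\cdots+Rsx_n\subseteq K$, which is the desired conclusion.

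There is essentially no serious obstacle here; the only point requiring attention is that the elements $s_i$ produced for the different generators are a priori distinct, so one must pass to their product (rather than a single localization witness) in order to obtain one $s\in S$ that works simultaneously for all generators — and this is precisely where the finite generation of $N$ is used, since an infinite product in $S$ need not be available.
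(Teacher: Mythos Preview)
Your argument is correct and is precisely the standard proof of this well-known localization fact; the paper itself does not give a proof beyond ``This is straightforward,'' so your write-up simply supplies the details the paper omits, via the same approach any reader would take.
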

\begin{proof}
This is straightforward.
\end{proof}

Let $S$ be a m.c.s. of $R$. Recall that the saturation $S^*$ of $S$ is defined as $S^*=\{x \in R : x/1\  is \ a\ unit  \ of\ S^{-1}R \}$. It is obvious that $S^*$ is a m.c.s. of $R$ containing $S$ \cite{Gr92}.
\begin{prop}\label{p12.2}
Let $S$ be a m.c.s. of $R$ and $M$ be an R-module. Then we have the following.
\begin{itemize}
\item [(a)] If $N$ is a strongly 2-absorbing second submodule of $M$ such that $S \cap Ann_R(N)=\emptyset$, then $N$ is a $S$-2-absorbing second submodule of $M$. In fact if $S \subseteq u(R)$ and $N$ is an $S$-2-absorbing second submodule of $M$, then $N$ is a strongly 2-absorbing second submodule of $M$.
\item [(b)] If $S_1 \subseteq S_2$ are m.c.s.s of $R$ and $N$ is an $S_1$-2-absorbing second submodule of $M$, then $N$ is an $S_2$-2-absorbing second submodule of $M$ in case $Ann_R(N) \cap S_2=\emptyset$.
\item [(c)] $N$ is an $S$-2-absorbing second submodule of $M$ if and only if $N$ is an  $S^*$-2-absorbing second submodule of $M$
\item [(d)] If $N$ is a finitely generated $S$-2-absorbing second submodule of $M$, then $S^{-1}N$ is a strongly 2-absorbing second submodule of $S^{-1}M$ for some $s \in S$.
\end{itemize}
\end{prop}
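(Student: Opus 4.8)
The plan is to dispose of the four parts in order; for (a)--(c) the mechanism is merely to convert a witness $s$ for one notion into a witness for the other, while (d) requires moving containments back and forth between $M$ and $S^{-1}M$.

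For (a), in the first implication I would take $s=1$: the defining condition of a strongly $2$-absorbing second submodule is then literally the $S$-$2$-absorbing second condition with witness $1$, and $S\cap Ann_R(N)=\emptyset$ supplies the required side hypothesis. For the converse, if $S\subseteq u(R)$ and $s\in S$ is a witness for $N$, then $s$ is invertible in $R$; multiplying $saN\subseteq K$, $sbN\subseteq K$, $sabN=0$ by $s^{-1}$ turns them into $aN\subseteq K$, $bN\subseteq K$, $abN=0$, and $N\neq 0$ since $1\in S$ and $Ann_R(N)\cap S=\emptyset$, so $N$ is strongly $2$-absorbing second. Part (b) is immediate: a fixed $s\in S_1\subseteq S_2$ still works, and the standing requirement $Ann_R(N)\cap S_2=\emptyset$ is assumed.

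For (c) the underlying elementary fact is that every $x\in S^{\ast}$ has an $R$-multiple in $S$: writing $x/1$ as a unit of $S^{-1}R$ one produces $a\in R$ and $w\in S$ with $xa=w$ in $R$. Using this with $x\in Ann_R(N)\cap S^{\ast}$ gives $w=xa\in Ann_R(N)\cap S$, a contradiction, so $Ann_R(N)\cap S^{\ast}=\emptyset$; since a witness $s\in S$ lies in $S^{\ast}$, the implication ``$S$-$2$-absorbing second $\Rightarrow$ $S^{\ast}$-$2$-absorbing second'' then follows from (b). Conversely, given a witness $s\in S^{\ast}$, choose $r\in R$ with $sr\in S$; then $sr$ is a witness in $S$, because from $abN\subseteq K$ together with $saN\subseteq K$ (resp.\ $sbN\subseteq K$, $sabN=0$) one multiplies by $r$ to obtain $(sr)aN\subseteq K$ (resp.\ $(sr)bN\subseteq K$, $(sr)abN=0$); and $Ann_R(N)\cap S\subseteq Ann_R(N)\cap S^{\ast}=\emptyset$.

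For (d), recall that every $S^{-1}R$-submodule of $S^{-1}M$ equals $S^{-1}K$ for some submodule $K$ of $M$ (take $K=\{m\in M:m/1\in L\}$). Suppose $\alpha\beta\,S^{-1}N\subseteq S^{-1}K$ with $\alpha=a/u$, $\beta=b/v$, $a,b\in R$, $u,v\in S$; since $1/(uv)$ is a unit this says $S^{-1}(abN)\subseteq S^{-1}K$, and as $N$, hence $abN$, is finitely generated, Lemma~\ref{l2.1} gives $t\in S$ with $tabN=(ta)bN\subseteq K$. Applying the $S$-$2$-absorbing second property of $N$ to the pair $ta,b$ and its fixed witness $s$ yields $s(ta)N\subseteq K$ or $sbN\subseteq K$ or $s(ta)bN=0$; localizing and cancelling the units $s,t,u,v$ in $S^{-1}R$ produces $\alpha S^{-1}N\subseteq S^{-1}K$ or $\beta S^{-1}N\subseteq S^{-1}K$ or $\alpha\beta S^{-1}N=0$. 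Finally $S^{-1}N\neq 0$, for otherwise finite generation would give $t'\in S$ with $t'N=0$, contradicting $Ann_R(N)\cap S=\emptyset$; hence $S^{-1}N$ is a strongly $2$-absorbing second submodule of $S^{-1}M$. (One could instead localize the equalities in Theorem~\ref{t2.3}(b) and cancel the unit $s$.) The only step that is not pure bookkeeping is this last descent from $S^{-1}M$ to $M$, which is precisely where finite generation of $N$ is genuinely used, via Lemma~\ref{l2.1}; the analogous mild subtlety in (c) is checking that passing to the saturation does not enlarge $Ann_R(N)\cap S$.
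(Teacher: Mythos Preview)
Your proof is correct and, for parts (a)--(c), follows the same path as the paper (which dismisses (a) and (b) as clear and handles (c) by the same unit-multiple trick, producing $us=uxa$ from the unit $x/1$ and using $S\subseteq S^*$ together with part (b)). For (d) the paper takes precisely the alternative route you mention in your closing parenthetical: it invokes the characterization of Theorem~\ref{t2.3}(b), localizes the equalities $s^2abN=s^2aN$, $s^2abN=s^2bN$, $s^3abN=0$, and cancels the unit $s/1$ in $S^{-1}R$ to get the strongly 2-absorbing second condition for $S^{-1}N$; finite generation is then used only once, to rule out $S^{-1}N=0$ via Lemma~\ref{l2.1}. Your primary argument through the definition works equally well but invokes finite generation a second time (in the descent $S^{-1}(abN)\subseteq S^{-1}K \Rightarrow tabN\subseteq K$), so the paper's route is marginally more economical.
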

\begin{proof}
(a) and (b) These are clear.

(c) Assume that $N$ is an $S$-2-absorbing second submodule of $M$.  We claim that $Ann_R(N) \cap  S^*=\emptyset$. To see this assume that there exists an $x \in Ann_R(N) \cap  S^*$  As $x \in S^*$,  $x/1$ is a unit of $S^{-1}R$ and so $(x/1)(a/s)=1$ for some $a \in R$ and $s \in S$. This yields that $us = uxa$ for some $u \in S$. Now we have that $us = uxa \in  Ann_R(N) \cap  S$,  a
contradiction. Thus, $Ann_R(N) \cap  S^*=\emptyset$. Now as $S\subseteq S^*$, by part (b), $N$ is an  $S^*$-2-absorbing second submodule of $M$. Conversely, assume that $N$ is an $S^*$-2-absorbing second submodule of $M$. Let $rtN \subseteq K$ for some $r, t \in R$. As $N$ is an  $S^*$-2-absorbing second submodule of $M$, there is a fixed $x \in  S^*$ such that $xrt \in Ann_R(N)$ or $xrN \subseteq K$ or $xtN \subseteq K$. As $x/1$ is a unit of $S^{-1}R$, there exist $u, s \in S$ and $a \in R$ such that $us = uxa$. Then
note that $(us)rt = uaxrt \in Ann_R(N)$ or $us(txN) \subseteq K$ or $us(rxN) \subseteq K$. Therefore, $N$ is a $S$-2-absorbing second submodule of $M$.

(d) As $N$ is an $S$-2-absorbing second submodule of $M$, there is a fixed $s \in S$. If $S^{-1}N=0$, then as $N$ is finitely generated, there is an $t \in S$ such that $t \in Ann_R(N)$ by Lemma \ref{l2.1}. Thus $Ann_R(N) \cap  S\not=\emptyset$, a contradiction. So, $S^{-1}N\not=0$. Now let $a/t ,b/h \in S^{-1}R$. As $N$ is an $S$-2-absorbing second submodule of $M$, we have either $abs^2N=as^2N$ or $abs^2N=bs^2N$ or $abs^3N=0$. This implies that either $(a/t)(b/h)S^{-1}N=(a/t)S^{-1}N$ or  $(a/t)(b/h)S^{-1}N=(b/h)S^{-1}N$ or $(a/t)(b/h)S^{-1}N=0$, as needed.
\end{proof}

The following example shows that the converse of Proposition \ref{p12.2} (d) is not true in general.
\begin{ex}\label{e222.2}
Consider the $\Bbb Z$-module $M=\Bbb Q \oplus \Bbb Q\oplus \Bbb Q$, where $\Bbb Q$ is the field of rational numbers. Take the submodule $N = \Bbb Z \oplus \Bbb Z \oplus 0$ and the m.c.s.  $S =\Bbb Z\setminus \{0\}$.
Now, take $s \in S$. Then
there exist prime numbers $p\not= q$ such that $gcd(p, s) = gcd(q, s) = 1$.
Then one can see that $s^2pqN \not=s^2pN$, $s^2pqN \not=s^2qN$, and $s^3pqN \not=0$. Thus $N$ is not an $S$-2-absorbing second submodule of $M$. Since $S^{-1}\Bbb Z =\Bbb Q$
is a field, $S^{-1}(\Bbb Q \oplus \Bbb Q\oplus \Bbb Q)$ is a vector space so that a non-zero submodule $S^{-1}N$ is a strongly 2-absorbing second submodule of $S^{-1}(\Bbb Q \oplus \Bbb Q\oplus \Bbb Q)$.
\end{ex}

\begin{thm}\label{t422.8}
Let $S$ be a m.c.s. of $R$ and $N$ be a submodule of an $R$-module $M$ such that $Ann_R(N) \cap S=\emptyset$. Then $N$ is an $S$-2-absorbing second submodule of $M$ if and only if $s^3N$ is a strongly 2-absorbing second submodule of $M$ for some $s \in S$.
\end{thm}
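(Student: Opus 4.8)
The plan is to reduce everything to the product-free characterization in Theorem~\ref{t2.3}(b), namely that $N$ is $S$-2-absorbing second precisely when there is a fixed $s\in S$ with $s^2abN=s^2aN$ or $s^2abN=s^2bN$ or $s^3abN=0$ for all $a,b\in R$. Both implications then follow by direct manipulation, taking the witnessing element of the strongly 2-absorbing second submodule (resp.\ of the $S$-2-absorbing second submodule) to be a power of $s$.

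First I would treat the forward direction. Assume $N$ is $S$-2-absorbing second and pick the fixed $s\in S$ furnished by Theorem~\ref{t2.3}(b). Since $S$ is multiplicatively closed and $Ann_R(N)\cap S=\emptyset$, we have $s^3\in S$ and hence $s^3N\neq 0$, so $s^3N$ is eligible to be a strongly 2-absorbing second submodule. Now suppose $ab(s^3N)\subseteq K$ for $a,b\in R$ and a submodule $K$ of $M$. Apply Theorem~\ref{t2.3}(b) to the pair $a,b$: if $s^2abN=s^2aN$, multiplying by $s$ gives $a(s^3N)=s^3aN=s^3abN=ab(s^3N)\subseteq K$; the case $s^2abN=s^2bN$ is symmetric and gives $b(s^3N)\subseteq K$; and the case $s^3abN=0$ gives $ab\cdot s^3N=0$, i.e.\ $ab\in Ann_R(s^3N)$. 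These are exactly the three alternatives in the definition of a strongly 2-absorbing second submodule, so $s^3N$ is strongly 2-absorbing second.

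For the converse, assume $s^3N$ is a strongly 2-absorbing second submodule of $M$ for some $s\in S$; I would show $N$ is $S$-2-absorbing second with witnessing element $s^3\in S$. The annihilator condition $Ann_R(N)\cap S=\emptyset$ is part of the hypothesis. If $abN\subseteq K$ for $a,b\in R$ and a submodule $K$, then, $K$ being a submodule, $ab(s^3N)=s^3(abN)\subseteq K$, so by the strongly 2-absorbing second property of $s^3N$ we get $a(s^3N)\subseteq K$ or $b(s^3N)\subseteq K$ or $ab\cdot s^3N=0$; rewriting these as $s^3aN\subseteq K$ or $s^3bN\subseteq K$ or $s^3abN=0$ shows that $N$ satisfies the $S$-2-absorbing second condition with the fixed element $s^3$.

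The computations are routine; the only point requiring a little care is to route the argument through the equivalent form in Theorem~\ref{t2.3}(b) rather than the raw definition, since the definition demands that a \emph{single} element work uniformly for all $a,b$, and it is exactly that uniformity that part (b) makes available. Accordingly I do not expect a genuine obstacle here.
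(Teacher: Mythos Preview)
Your proof is correct and follows essentially the same route as the paper. The only cosmetic difference is that the paper phrases both directions entirely through the equality-style characterizations---Theorem~\ref{t2.3}(b) for $N$ and the analogous trichotomy $abN'=aN'$ or $abN'=bN'$ or $abN'=0$ from \cite[Theorem~3.3]{AF16} for $N'=s^3N$---whereas you work with the containment definitions directly in the converse; the underlying manipulations with powers of $s$ and the choice of witness $s^3$ are identical.
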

\begin{proof}
Let $s^3N$ be a strongly 2-absorbing second submodule of $M$ for some $s \in S$ and $a, b \in R$. Then $abs^3N=as^3N$ or $abs^3N=bs^3N$ or $abs^3N=0$ by \cite[Theorem 3.3]{AF16}. Hence $s^6abN=s^6aN$ or $s^6abN=s^6bN$ or $s^9abN=0$. Set $t:=s^3$. Then $t^2abN=t^2aN$ or $t^2abN=t^2bN$ or $t^3abN=0$. Thus by Theorem \ref{t2.3} $(b)\Rightarrow (a)$, $N$ is an $S$-2-absorbing second submodule of $M$. Conversely, suppose that
$N$ is an $S$-2-absorbing second submodule of $M$ and $a, b \in R$. Then for some $s \in S$ we have $s^2abN=s^2aN$ or $s^2abN=s^2bN$ or $s^3abN=0$ by Theorem \ref{t2.3} $(b)\Rightarrow (a)$. This implies that $abs^3N=as^3N$ or $abs^3N=bs^3N$ or $abs^3N=0$.  We not that $Ann_R(N) \cap S=\emptyset$, implies that  $s^3N\not=0$. Therefore,  by
 \cite[Theorem 3.3]{AF16}, $s^3N$ is a strongly 2-absorbing second submodule of $M$.
 \end{proof}

\begin{prop}\label{p22.9}
Let $S$ be a m.c.s. of $R$ and $M$ be an $R$-module. Let $N\subset K$ be two submodules of $M$ and $K$ be a  $S$-2-absorbing second submodule of $M$. Then $K/N$ is a $S$-2-absorbing second submodule of $M/N$.
\end{prop}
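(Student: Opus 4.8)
The plan is to transport the defining property of $K$ through the standard order‑isomorphism $L\mapsto L/N$ between the submodules of $M$ that contain $N$ and the submodules of $M/N$, keeping the very same witness $s\in S$ that serves $K$. Two things then have to be checked for $K/N$: the condition $Ann_R(K/N)\cap S=\emptyset$ built into Definition \ref{d2.1}, and the trichotomy of Definition \ref{d2.1} with a fixed $s$. The second is a formal consequence of the correspondence, while the first is the delicate point and is where I expect the main obstacle to lie.

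For the trichotomy, take $a,b\in R$ and a submodule $\overline L$ of $M/N$ with $ab(K/N)\subseteq\overline L$; write $\overline L=L/N$ with $N\subseteq L\subseteq M$. Since $N\subseteq L$, the inclusion $ab(K/N)\subseteq L/N$ is equivalent to $abK\subseteq L$. As $K$ is an $S$-2-absorbing second submodule of $M$, there is a fixed $s\in S$ with $saK\subseteq L$ or $sbK\subseteq L$ or $sabK=0$; applying the natural surjection $M\to M/N$ to these relations gives $sa(K/N)\subseteq\overline L$ or $sb(K/N)\subseteq\overline L$ or $sab(K/N)=0$, which is exactly what is required of $K/N$ with the same $s$. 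Equivalently, I could route the argument through Theorem \ref{t2.3}(b): from $s^2abK\in\{s^2aK,s^2bK\}$ or $s^3abK=0$ for all $a,b\in R$, the image of each module equality under $M\to M/N$ yields $s^2ab(K/N)\in\{s^2a(K/N),s^2b(K/N)\}$ or $s^3ab(K/N)=0$, and then Theorem \ref{t2.3} $(b)\Rightarrow(a)$ applies to $K/N$.

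It remains to verify $Ann_R(K/N)\cap S=\emptyset$, i.e.\ $(N:_RK)\cap S=\emptyset$, and this is the step I expect to be the real obstacle: it does not follow from $Ann_R(K)\cap S=\emptyset$ by itself, since $Ann_R(K)\subseteq(N:_RK)$ with possibly proper containment. The natural attempt is by contradiction: if $t\in(N:_RK)\cap S$ then $tK\subseteq N$, hence $t^2K\subseteq tN\subseteq N$, and applying the trichotomy of $K$ (with witness $s$) to $t\cdot t\cdot K\subseteq N$ forces $stK\subseteq N$ or $st^2K=0$; the second alternative gives $st^2\in Ann_R(K)\cap S$, contradicting $Ann_R(K)\cap S=\emptyset$, but the first alternative merely returns another element $st\in(N:_RK)\cap S$, so this line does not close by itself (and replaying it in the ideal-theoretic form, Lemma \ref{l1.4} or Theorem \ref{t2.3}(d) applied to $I=J=(N:_RK)$, runs into the same branch). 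I would therefore carry the argument through under the additional hypothesis $Ann_R(K/N)\cap S=\emptyset$, in the spirit of the proviso already used in Proposition \ref{p12.2}(b); granted this, the correspondence argument of the previous paragraph finishes the proof with no further work.
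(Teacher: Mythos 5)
Your analysis is right on both counts, and it in fact does more than the paper, whose entire proof is the sentence ``This is straightforward.'' The trichotomy transfer is correct: every submodule of $M/N$ is $L/N$ for some $L\supseteq N$, the inclusion $ab(K/N)\subseteq L/N$ is equivalent to $abK\subseteq L$ because $N\subseteq L$, and the three alternatives for $K$ push forward under $M\to M/N$ with the same witness $s$. That is presumably the intended ``straightforward'' argument.

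Your suspicion about the condition $Ann_R(K/N)\cap S=\emptyset$ is not just a technical worry: the proposition as printed is false without it, and the paper's own second example furnishes a counterexample. Take $M=K=\Bbb Z_{p^\infty}\oplus\Bbb Z_{pq}$ over $R=\Bbb Z$ with $S=\Bbb Z\setminus\{0\}$, which the paper shows is an $S$-2-absorbing second module (with $s=pq$), and let $N=\Bbb Z_{p^\infty}\oplus 0$. Then $K/N\cong\Bbb Z_{pq}$ and $Ann_{\Bbb Z}(K/N)=pq\Bbb Z$ meets $S$, so $K/N$ fails the annihilator requirement built into Definition \ref{d2.1} and is not an $S$-2-absorbing second submodule of $M/N$. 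Your observation that the obvious contradiction argument only recycles elements of $(N:_RK)\cap S$ (the branch $stK\subseteq N$ never closes) is exactly why no proof can exist. So the correct statement is the one you propose: add the hypothesis $Ann_R(K/N)\cap S=\emptyset$ (equivalently $(N:_RK)\cap S=\emptyset$), after which your correspondence argument is a complete proof.
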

\begin{proof}
This is straightforward.
\end{proof}

\begin{prop}\label{p1.6}
Let $S$ be a m.c.s. of $R$ and $N$ be an $S$-2-absorbing second submodule of an $R$-module $M$.
Then we have the following.
\begin{itemize}
  \item [(a)] $Ann_R(N)$ is a $S$-2-absorbing  ideal of $R$.
  \item [(b)] If $K$ is a submodule of $M$ such that $(K:_RN)\cap S=\emptyset$,
   then $(K:_RN)$ is a $S$-2-absorbing ideal of $R$.
  \item [(c)] There exists a fixed $s \in S$ such that $s^nN=s^{n+1}N$, for all $n \geq 3$.
\end{itemize}
\end{prop}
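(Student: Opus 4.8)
The plan is to treat the three parts independently, each time reducing to the defining property of an $S$-2-absorbing second submodule (or, for part~(c), to its reformulation in Theorem~\ref{t2.3}(b)), always using the single fixed $s\in S$ supplied by the hypothesis on $N$.

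For part~(a), I would first note that $(Ann_R(N):_R R)=Ann_R(N)$, which is disjoint from $S$ by hypothesis, so the transporter condition in the definition of an $S$-2-absorbing ideal is met. Then take $a,b,c\in R$ with $abc\in Ann_R(N)$, i.e. $abcN=0$, and rewrite this as $abN\subseteq (0:_M c)$, a submodule of $M$. Applying the $S$-2-absorbing second property of $N$ to this inclusion (with $K=(0:_Mc)$) yields $saN\subseteq (0:_Mc)$, or $sbN\subseteq (0:_Mc)$, or $sabN=0$; unravelling the transporter and using commutativity, these read $sac\in Ann_R(N)$, $sbc\in Ann_R(N)$, $sab\in Ann_R(N)$, respectively, which is precisely the $S$-2-absorbing condition for the ideal $Ann_R(N)$ with witness $s$. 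Part~(b) is the identical argument with the zero submodule replaced by $K$: from $abc\in (K:_RN)$, that is $abcN\subseteq K$, deduce $abN\subseteq (K:_Mc)$ and apply the $S$-2-absorbing second property to land one of $sac$, $sbc$, $sab$ in $(K:_RN)$; the disjointness $(K:_RN)\cap S=\emptyset$ is assumed, so there is nothing further to verify.

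For part~(c), I would feed $a=b=s$ into Theorem~\ref{t2.3}(b): this gives $s^{2}\cdot s\cdot s\cdot N=s^{2}\cdot s\cdot N$ or $s^{3}\cdot s\cdot s\cdot N=0$, i.e. $s^{4}N=s^{3}N$ or $s^{5}N=0$. The latter is impossible, as it would place $s^{5}\in Ann_R(N)\cap S$, contradicting $Ann_R(N)\cap S=\emptyset$. Hence $s^{4}N=s^{3}N$, and multiplying through by $s^{n-3}$ gives $s^{n+1}N=s^{n}N$ for every $n\ge 3$.

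I do not anticipate a genuine obstacle. The only point requiring a little care is the bookkeeping: choosing the correct submodule to play the role of ``$K$'' when invoking the definition --- namely the transporters $(0:_Mc)$ in~(a) and $(K:_Mc)$ in~(b) --- and checking that one and the same $s$ works across all three alternatives, which is automatic because the definition of an $S$-2-absorbing second submodule hands us a fixed $s$ rather than one depending on $a,b,K$.
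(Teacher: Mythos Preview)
Your proof is correct and follows the same overall strategy as the paper: apply the defining property of an $S$-2-absorbing second submodule (or its reformulation in Theorem~\ref{t2.3}) with a judicious choice of test submodule.

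The bookkeeping differs slightly. For (a) the paper takes $K=abN$ in the trivial inclusion $abN\subseteq abN$, obtains $saN\subseteq abN$ or $sbN\subseteq abN$ or $sabN=0$, and then multiplies by $c$; for (b) it regroups $abc=(ac)b$ and applies the definition with the given $K$. Your uniform choice of the transporter $(0:_Mc)$, resp.\ $(K:_Mc)$, treats (a) and (b) by the same template and is arguably cleaner. For (c) the paper applies the definition directly with $a=b=s^2$ and $K=s^4N$, whereas you invoke Theorem~\ref{t2.3}(b) with $a=b=s$; once unpacked, both produce the dichotomy $s^4N=s^3N$ or $s^5N=0$ and finish identically.
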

\begin{proof}
(a) Let $a, b, c \in R$ and $abc\in Ann_R(N)$.
Then there exists a fixed $s \in S$ and $abN \subseteq abN$ implies that $asN \subseteq abN$ or $bsN \subseteq abN$ or $sabN=0$. If  $sabN=0$, then we are done. If $asN \subseteq abN$,
then $casN \subseteq cabN=0$. In other case, we do the same.

(b) Let $a, b, c \in R$ and $abc=(ac)b\in (K:_RN)$.
Then  there exists a fixed $s \in S$ such that $acsN \subseteq K$  or $cbsN\subseteq bs N\subseteq K$ or $sabc\in Ann_R(N) \subseteq (K:_RN)$, as needed.

(c) As $N$ is an $S$-2-absorbing second submodule of $M$, there exists a fixed $s \in S$. It is enough to show that $s^3N=s^4N$.
It is clear that $s^4N \subseteq s^3N$.
Since $N$ is an $S$-2-absorbing second submodule, $(s^2)(s^2)N=s^4N \subseteq s^4N$
implies that either $s^3N \subseteq s^4N$ or $s^5N=0$. If $s^5N=0$, then $s^5 \in Ann_R(N) \cap S=\emptyset$ which is a contradiction. Thus $s^3N \subseteq s^4N$, as desired.
\end{proof}

\begin{prop}\label{t121.15}
Let $S$ be a m.c.s. of $R$ and $N$ be a $S$-2-absorbing second submodule of $M$. Then the following statements hold for some $s \in S$.
\begin{itemize}
\item [(a)] $tsN\subseteq thN$ or $hsN\subseteq thN$  for all $t,h \in S$.
\item [(b)] $(Ann_R(N):_Rth)\subseteq (Ann_R(N):_Rts)$ or $(Ann_R(N):_Rth)\subseteq  (Ann_R(N):_Rsh)$ for all $t,h \in S$.
\end{itemize}
\end{prop}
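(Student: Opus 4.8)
The plan is to take $s\in S$ to be precisely the fixed element guaranteed by the definition of an $S$-2-absorbing second submodule (Definition \ref{d2.1}), and to verify that this same $s$ witnesses both (a) and (b). So throughout, $s$ is the element such that $abN\subseteq K$ implies $saN\subseteq K$ or $sbN\subseteq K$ or $sabN=0$.

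For part (a), fix $t,h\in S$ and apply the defining property to the trivial inclusion $thN\subseteq thN$, reading $t,h$ as the two ring elements and $K=thN$ as the submodule. This yields $stN\subseteq thN$ or $shN\subseteq thN$ or $sthN=0$. The last alternative would force $sth\in Ann_R(N)\cap S$, which is empty by hypothesis; hence one of the first two alternatives holds, which is exactly the assertion $tsN\subseteq thN$ or $hsN\subseteq thN$.

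For part (b), I would deduce it directly from (a), using that $(Ann_R(N):_R x)=\{r\in R: rxN=0\}$ for $x\in R$. Suppose first $tsN\subseteq thN$ (the first case of (a)). Then for any $r\in (Ann_R(N):_R th)$ we have $(rts)N=r(tsN)\subseteq r(thN)=(rth)N=0$, so $rts\in Ann_R(N)$, i.e. $r\in (Ann_R(N):_R ts)$; thus $(Ann_R(N):_R th)\subseteq (Ann_R(N):_R ts)$. In the second case $hsN\subseteq thN$ the symmetric computation gives $(Ann_R(N):_R th)\subseteq (Ann_R(N):_R sh)$. This establishes (b).

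The argument is essentially routine; the only points requiring care are that a single $s$ — the fixed witness from Definition \ref{d2.1} — serves uniformly for all $t,h\in S$ (which is exactly what the word ``fixed'' in that definition provides), and that the degenerate possibility $sthN=0$ is excluded by the assumption $Ann_R(N)\cap S=\emptyset$. I do not anticipate any serious obstacle.
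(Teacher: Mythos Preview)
Your proof is correct and is in fact considerably more direct than the paper's. For part (a), the paper does not simply take $K=thN$; instead it passes through completely irreducible submodules (Remark \ref{r2.1}): for each completely irreducible $L$ containing $thN$ it applies the defining property with $K=L$, notes that the annihilator alternative is ruled out, and then argues by contradiction that one of $tsN$, $hsN$ must land in \emph{every} such $L$, using an auxiliary $L_1\cap L_2$ step mirroring the proof of Proposition \ref{t191.15}. Your trick of plugging in $K=thN$ itself collapses this entire detour into one line; it works here precisely because in the ``second'' setting a single choice of $K$ tests the whole submodule at once, whereas in the primal setting of Proposition \ref{t191.15} one must range over all $m\in(N:_Mth)$ and the alternatives can vary with $m$. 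For part (b), the paper merely cites part (a) together with Proposition \ref{t191.15}(a), presumably via Proposition \ref{p1.6}(a); your derivation of (b) directly from (a) via the obvious inclusion $r(tsN)\subseteq r(thN)=0$ is equally valid and more transparent.
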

\begin{proof}
(a) Let $N$ be a $S$-2-absorbing second submodule of $M$. Then there is a fixed $s \in S$. Let $L$ be a completely irreducible submodule of $M$ such that $thN \subseteq L$, where $t,h \in S$. Then $tsN \subseteq L$ or $shN \subseteq L$ or $s th \in Ann_R(N)$. As $Ann_R(N)\cap S=\emptyset$, we have  $s th \not\in Ann_R(N)$.
If for each completely irreducible submodule of $M$, we have $tsN \subseteq L$ (resp. $shN \subseteq L$), then we are done by Remark \ref{r2.1}. So suppose that there are completely irreducible submodules $L_1$ and $L_2$ of $M$ such that $tsN \not\subseteq L_1$ and $shN \not\subseteq L_2$. Then since $N$ is a $S$-2-absorbing second submodule of $M$, we conclude that  $hsN \subseteq L_1$ and $stN \subseteq L_2$. Now $htN\subseteq L_1 \cap L_2$ implies that $hsN\subseteq L_1 \cap L_2$ or $stN\subseteq L_1 \cap L_2$. Thus $tsN \subseteq L_1$ or $shN \subseteq L_2$, a contradiction.

(b) This follows from Proposition \ref{t121.15} (a) and Proposition \ref{t191.15} (a).
\end{proof}

An $R$-module $M$ is said to be a \emph{comultiplication module} if for every submodule $N$ of $M$ there exists an ideal $I$ of $R$ such that $N=(0:_MI)$, equivalently, for each submodule $N$ of $M$, we have $N=(0:_MAnn_R(N))$ \cite{AF07}.
\begin{lem}\label{l1.11}
Let $S$ be a m.c.s. of $R$ and $N$ be a submodule of a comultiplication $R$-module $M$. If $Ann_R(N)$ is a $S$-2-absorbing  ideal of $R$, then $N$ is a $S$-2-absorbing second submodule of $M$.
\end{lem}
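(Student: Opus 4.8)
The plan is to exploit the comultiplication hypothesis --- which says every submodule $K$ of $M$ equals $(0:_M \mathrm{Ann}_R(K))$ --- to convert the defining condition of an $S$-2-absorbing second submodule into a purely ideal-theoretic statement about $A:=\mathrm{Ann}_R(N)$, so that this lemma becomes essentially a converse of Proposition \ref{p1.6}(a). First I would note that, $A$ being an $S$-2-absorbing ideal, we automatically have $A\cap S=\emptyset$ (so the standing hypothesis in Definition \ref{d2.1} is met) and there is a fixed $s\in S$ witnessing the $S$-2-absorbing property of $A$; I claim this same $s$ works for $N$.

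Next, fix $a,b\in R$ and a submodule $K$ of $M$ with $abN\subseteq K$, and put $B:=\mathrm{Ann}_R(K)$, so that $K=(0:_M B)$ by comultiplication. A short computation shows that for any $r\in R$ one has $rN\subseteq K\iff rBN=0\iff rB\subseteq A$; applying this with $r=ab,\ sa,\ sb$ turns the hypothesis $abN\subseteq K$ into $abB\subseteq A$, and turns the three desired conclusions $saN\subseteq K$, $sbN\subseteq K$, $sabN=0$ into $saB\subseteq A$, $sbB\subseteq A$, $sab\in A$ respectively. Thus the lemma reduces to the following purely ideal-theoretic claim: if $B$ is an ideal of $R$, $A$ is an $S$-2-absorbing ideal with fixed $s$, and $abB\subseteq A$, then $sab\in A$ or $saB\subseteq A$ or $sbB\subseteq A$.

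To prove this claim I would argue by contradiction, in the spirit of Lemma \ref{l1.3}: assume $sab\notin A$, $saB\not\subseteq A$ and $sbB\not\subseteq A$, and choose $c\in B$ with $sac\notin A$ and $d\in B$ with $sbd\notin A$. From $abc\in A$ the $S$-2-absorbing property of $A$ forces $sbc\in A$; from $abd\in A$ it forces $sad\in A$; and from $ab(c+d)\in A$ it gives $sa(c+d)\in A$ or $sb(c+d)\in A$. The first alternative yields $sac\in A$ (using $sad\in A$), the second yields $sbd\in A$ (using $sbc\in A$), each contradicting the choice of $c$ or $d$. Hence one of $sab\in A$, $saB\subseteq A$, $sbB\subseteq A$ holds; translating back through the dictionary above then shows that $N$ is an $S$-2-absorbing second submodule of $M$ with fixed element $s$.

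The one genuinely nontrivial point --- the expected obstacle --- is this last reduction: the $S$-2-absorbing hypothesis on $A$ constrains only triples of \emph{elements}, so one must run the additive ``$c+d$'' device to upgrade element-level conclusions into the inclusions $saB\subseteq A$ or $sbB\subseteq A$ holding across the entire ideal $B$, and verify that every branch of the case split closes. Everything else --- the comultiplication translation and the bookkeeping with the fixed $s$ --- is routine.
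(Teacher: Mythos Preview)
Your proof is correct and follows essentially the same route as the paper's: translate $abN\subseteq K$ into $ab\,\mathrm{Ann}_R(K)\subseteq \mathrm{Ann}_R(N)$, invoke the $S$-2-absorbing hypothesis to obtain $sab\in\mathrm{Ann}_R(N)$ or $sa\,\mathrm{Ann}_R(K)\subseteq\mathrm{Ann}_R(N)$ or $sb\,\mathrm{Ann}_R(K)\subseteq\mathrm{Ann}_R(N)$, then use comultiplication to recover $saN\subseteq K$ or $sbN\subseteq K$. The only difference is that you spell out the ``element-to-ideal'' upgrade via the additive $c+d$ trick, whereas the paper simply asserts the conclusion $s\,\mathrm{Ann}_R(K)aN=0$ or $s\,\mathrm{Ann}_R(K)bN=0$ or $sabN=0$ without further comment; your version is therefore a bit more careful, but not substantively different.
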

\begin{proof}
Let $a, b \in R$, $K$ be a submodule of $M$, and $abN\subseteq K$. Then we have $Ann_R(K)abN=0$. As $Ann_R(N)$ is a $S$-2-absorbing  ideal of $R$, there is a fixed $s \in S$ and so $sAnn_R(K)aN=0$ or $sAnn_R(K)bN=0$ or $sabN=0$. If $sabN=0$, we are done. If $sAnn_R(K)aN=0$ or $sAnn_R(K)bN=0$, then $Ann_R(K) \subseteq Ann_R(saN)$ or $Ann_R(K) \subseteq Ann_R(sbN)$. Hence, $saN \subseteq K$ or $sbN \subseteq K$ since $M$ is a comultiplication $R$-module.
\end{proof}

The following example shows that the Lemma \ref{l1.11} is not satisfied in general.

\begin{ex}\label{e1.11}
 By \cite[3.9]{AF07}, the $\Bbb Z$-module $\Bbb Z$ is not a comultiplication $\Bbb Z$-module. Take the m.c.s. $S=\Bbb Z \setminus \{0\}$. The submodule $N=p\Bbb Z$ of $\Bbb Z$, where $p$ is a prime number, is not $S$-2-absorbing second submodule. But $Ann_{\Bbb Z}(p\Bbb Z)=0$ is an $S$-2-absorbing ideal of $\Bbb Z$.
\end{ex}

\begin{prop}\label{c2.10}
Let $S$ be a m.c.s. of $R$ and $M$ be an $R$-module. Then we have the following.
\begin{itemize}
\item [(a)] If $M$ is a multiplication $S$-2-absorbing second $R$-module, then every submodule $N$ of $M$  with $(N:_RM) \cap S=\emptyset$ is a $S$-2-absorbing submodule of $M$.
\item [(b)] If $M$ is a comultiplication $R$-module such that the zero submodule of $M$ is a $S$-2-absorbing submodule, then every submodule $N$ of $M$ with $Ann_R(N) \cap S=\emptyset$ is a $S$-2-absorbing second submodule of $M$.
\end{itemize}
\end{prop}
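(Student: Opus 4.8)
The plan for both parts is the same: move the problem onto an ideal of $R$ attached to $N$ — the colon ideal $(N:_RM)$ in part (a), the annihilator $Ann_R(N)$ in part (b) — show that ideal is $S$-2-absorbing, and then use the (co)multiplication hypothesis to come back to $N$.

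\textbf{Part (a).} Since $M$ is an $S$-2-absorbing second $R$-module it is an $S$-2-absorbing second submodule of itself, so I would apply Proposition \ref{p1.6}(b) with $M$ in the role of ``$N$'' and the given submodule $N$ in the role of ``$K$'': the hypothesis $(N:_RM)\cap S=\emptyset$ is exactly the one required there, and the conclusion is that $(N:_RM)$ is an $S$-2-absorbing ideal of $R$. Since $M$ is a multiplication module, I would then invoke the correspondence between $S$-2-absorbing submodules of a multiplication module and their colon ideals (\cite[Proposition 3]{uatk20}, used in the same direction in the proof of Theorem \ref{t181.3}) to conclude that $N$ is $S$-2-absorbing. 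A self-contained variant is to argue directly from Theorem \ref{t2.3}(d): given $a,b\in R$ and $m\in M$ with $abm\in N$, write $Rm=(Rm:_RM)M$ using that $M$ is multiplication, so that $(Ra)\bigl(b(Rm:_RM)\bigr)M\subseteq N$, and apply (d) to the $S$-2-absorbing second module $M$; the three outcomes read $saM\subseteq N$ (so $sab\in(N:_RM)$), or $sb(Rm:_RM)M=sb\,Rm\subseteq N$ (so $sbm\in N$), or $sab\,(Rm:_RM)M=0$ (so $sabm=0$).

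The step I expect to be the obstacle in the self-contained variant is this last outcome: from $sabm=0$ one must still reach one of the three clauses of the definition of an $S$-2-absorbing submodule, and promoting ``$sab$ kills the element $m$'' to ``$sab\in(N:_RM)$'' forces a second, cancellation-type use of the multiplication hypothesis. That is why I would present part (a) through the colon ideal; one should only check whether \cite[Proposition 3]{uatk20} carries a finite-generation hypothesis — Proposition \ref{c2.10}(a) as phrased does not assume finite generation — and, if so, either add that hypothesis as a remark or dispatch the boundary case by hand.

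\textbf{Part (b).} By Lemma \ref{l1.11}, and because $M$ is a comultiplication module with $Ann_R(N)\cap S=\emptyset$, it is enough to prove that $Ann_R(N)$ is an $S$-2-absorbing ideal of $R$. Here $Ann_R(N)=(0:_RN)$ and $(0)$ is, by hypothesis, an $S$-2-absorbing submodule of $M$. First I would record the submodule form of that hypothesis — there is a fixed $s\in S$ such that $abL\subseteq(0)$, for $a,b\in R$ and a submodule $L$ of $M$, forces $saL\subseteq(0)$ or $sbL\subseteq(0)$ or $sab\in(0:_RM)=Ann_R(M)$ — which drops out of the elementwise definition by the same ``evaluate on a sum'' device as in Lemma \ref{l1.3}. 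Then, given $a,b,c\in R$ with $abc\in Ann_R(N)$, the submodule $cN$ satisfies $ab(cN)\subseteq(0)$, so one of $sacN=0$, $sbcN=0$, or $sab\in Ann_R(M)$ holds; since $N\subseteq M$ gives $Ann_R(M)\subseteq Ann_R(N)$, in every case one of $sab,\,sac,\,sbc$ lies in $Ann_R(N)$. Hence $Ann_R(N)$ is an $S$-2-absorbing ideal of $R$, and Lemma \ref{l1.11} yields that $N$ is an $S$-2-absorbing second submodule of $M$. The only point needing care here is the passage to the submodule form, which is routine; otherwise part (b) is purely formal.
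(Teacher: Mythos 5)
Your proposal follows the paper's own proof essentially verbatim: part (a) is exactly Proposition \ref{p1.6}(b) applied to $M$ as a submodule of itself followed by \cite[Proposition 3]{uatk20}, and part (b) reduces to showing $Ann_R(N)$ is an $S$-2-absorbing ideal of $R$ and then invokes Lemma \ref{l1.11}, just as the paper does. The only difference is that in (b) you pass explicitly to the submodule form of the hypothesis on $(0)$ before applying it to $cN$, whereas the paper applies the elementwise definition to all of $N$ at once and glosses over the uniformity of the choice among the three outcomes; your version is the more careful one.
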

\begin{proof}
(a) Let $M$ is a multiplication $S$-2-absorbing second $R$-module and $N$ be a submodule of $M$ with $(N:_RM) \cap S=\emptyset$. Then by Proposition \ref{p1.6} (b), $(N:_RM)$ is an $S$-2-absorbing ideal of $R$. Now the result follows from \cite[Proposition 3]{uatk20}.

(b) Let $M$ is a comultiplication $R$-module with the zero submodule of $M$ is a $S$-2-absorbing submodule and $N$ be a submodule of $M$ with $Ann_R(N) \cap S=\emptyset$. We show that $Ann_R(N)$ is an $S$-2-absorbing ideal of $R$. To see this let $a, b, c \in R$ and $abc=(ac)b\in Ann_R(N)$.
Then there exists a fixed $s \in S$ such that $acsN=0$  or $cbsN\subseteq bs N=0$ or $sabc\in Ann_R(M) \subseteq Ann_R(N)$. Thus $Ann_R(N)$ is an $S$-2-absorbing ideal of $R$. Now the result follows from Lemma \ref{l1.11}.
\end{proof}

An $R$-module $M$ satisfies the \emph{double annihilator
conditions} (DAC for short)  if for each ideal $I$ of $R$
we have $I=Ann_R((0:_MI))$ \cite{Fa95}. An
$R$-module $M$ is said to be a \emph{strong comultiplication module} if $M$ is
a comultiplication $R$-module and satisfies the DAC conditions \cite{AF09}.
\begin{thm}\label{t2.5}
Let $M$ be a strong comultiplication $R$-module and $N$ be a submodule of $M$ such that $Ann_R(N) \cap S=\emptyset$, where $S$ is a m.c.s. of $R$. Then the following are equivalent:
\begin{itemize}
\item [(a)] $N$ is an $S$-2-absorbing second submodule of $M$;
\item [(b)]  $Ann_R(N)$ is a $S$-2-absorbing  ideal of $R$;
\item [(c)] $N=(0:_MI)$ for some $S$-2-absorbing  ideal $I$ of $R$ with $Ann_R(N) \subseteq I$.
\end{itemize}
\end{thm}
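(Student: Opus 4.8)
The plan is to establish the cycle of implications $(a)\Rightarrow(b)\Rightarrow(c)\Rightarrow(a)$, using the machinery already built in the excerpt together with the defining property of a strong comultiplication module, namely that $M$ is a comultiplication module satisfying the DAC condition $I=Ann_R((0:_MI))$ for every ideal $I$ of $R$.

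First, $(a)\Rightarrow(b)$ is immediate from Proposition \ref{p1.6}(a), which states that if $N$ is an $S$-2-absorbing second submodule then $Ann_R(N)$ is an $S$-2-absorbing ideal of $R$; note this does not even require the strong comultiplication hypothesis. Next, for $(b)\Rightarrow(c)$, I would set $I:=Ann_R(N)$; since $M$ is a comultiplication module we have $N=(0:_MAnn_R(N))=(0:_MI)$, and trivially $Ann_R(N)\subseteq I$, so $(c)$ holds with this choice of $I$ (and $I$ is $S$-2-absorbing by hypothesis). Finally, for $(c)\Rightarrow(a)$, suppose $N=(0:_MI)$ for some $S$-2-absorbing ideal $I$ with $Ann_R(N)\subseteq I$; applying the DAC condition gives $Ann_R(N)=Ann_R((0:_MI))=I$, so $Ann_R(N)=I$ is an $S$-2-absorbing ideal of $R$, and then Lemma \ref{l1.11} (applicable since $M$ is in particular a comultiplication module) yields that $N$ is an $S$-2-absorbing second submodule of $M$.

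The technical point that makes the argument close up cleanly is the DAC condition: without it, the ideal $I$ appearing in $(c)$ need only satisfy $(0:_MI)=N$ and $Ann_R(N)\subseteq I$, which is weaker than $I=Ann_R(N)$, and one could not directly conclude that $Ann_R(N)$ itself is $S$-2-absorbing. I expect this to be the only real obstacle; everything else is a direct citation of Proposition \ref{p1.6}(a), the comultiplication identity $N=(0:_MAnn_R(N))$, and Lemma \ref{l1.11}. One should also check at the outset that $Ann_R(N)\cap S=\emptyset$ is exactly the hypothesis needed to make sense of $Ann_R(N)$ being an $S$-2-absorbing ideal and of $N$ being an $S$-2-absorbing second submodule, so the three statements are being compared under a common standing assumption.
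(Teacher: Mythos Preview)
Your proposal is correct and follows essentially the same route as the paper's proof: the paper also cycles through $(a)\Rightarrow(b)$ via Proposition~\ref{p1.6}(a), $(b)\Rightarrow(c)$ by taking $I=Ann_R(N)$ and using the comultiplication identity $N=(0:_MAnn_R(N))$, and $(c)\Rightarrow(a)$ by invoking DAC to get $Ann_R(N)=I$ and then applying Lemma~\ref{l1.11}. Your added commentary on why DAC is the key point is accurate and a nice clarification, but the argument itself is identical.
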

\begin{proof}
$(a)\Rightarrow (b)$ This follows from Proposition \ref{p1.6} (a).

$(b)\Rightarrow (c)$ As $M$ is a comultiplication $R$-module, $N=(0:_MAnn_R(N))$. Now the result is clear.

$(c)\Rightarrow (a)$
As $M$ satisfies the DAC conditions, $Ann_R((0:_MI))=I$. Now the result follows from Lemma \ref{l1.11}.
\end{proof}

\begin{lem}\label{l02.5}
Let $S$ be a m.c.s. of $R$ and $M$ be an $R$-module. If $N$ is an $S$-second submodule of $M$. Then there exists a fixed $s \in S$ such that
$abN \subseteq K$, where $a, b \in R$ and $K$ is a submodule of $M$ implies that either $sa \in Ann_R(N)$ or $sb \in Ann_R(N)$ or $sN \subseteq K$
\end{lem}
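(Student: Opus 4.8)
The plan is to feed the defining condition of an $S$-second submodule two carefully chosen residual submodules, and to take $s^2$ as the required fixed element, where $s$ is the element witnessing that $N$ is $S$-second.

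First, fix $s \in S$ such that for every $r \in R$ and every submodule $L$ of $M$, the inclusion $rN \subseteq L$ forces $rsN = 0$ or $sN \subseteq L$; this is available since $N$ is $S$-second, and we record that $s^2 \in S$ and $Ann_R(N) \cap S = \emptyset$. Suppose now $abN \subseteq K$ with $a, b \in R$ and $K$ a submodule of $M$. Put $L_1 := (K :_M a)$, a submodule of $M$. Since $abN \subseteq K$ we have $bN \subseteq L_1$, so the $S$-second condition applied to $r = b$ gives $bsN = 0$ or $sN \subseteq L_1$. If $bsN = 0$ then $sb \in Ann_R(N)$, hence $s^2 b \in Ann_R(N)$, and we are finished.

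Otherwise $sN \subseteq L_1$, which means $asN \subseteq K$. Now put $L_2 := (K :_M s)$, again a submodule of $M$. From $asN \subseteq K$ we get $aN \subseteq L_2$, so the $S$-second condition applied to $r = a$ gives $asN = 0$ or $sN \subseteq L_2$. If $asN = 0$ then $sa \in Ann_R(N)$, hence $s^2 a \in Ann_R(N)$. If instead $sN \subseteq L_2$, then $s \cdot sN = s^2 N \subseteq K$. In each branch the element $t := s^2 \in S$ satisfies $ta \in Ann_R(N)$ or $tb \in Ann_R(N)$ or $tN \subseteq K$, and renaming $t$ as $s$ yields the statement.

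The argument is essentially routine once one hits on the two auxiliary submodules $(K :_M a)$ and $(K :_M s)$ to test against the hypothesis; the only point to watch is that the $S$-second condition must be invoked twice, which is why a single $s$ gets replaced by $s^2$, and that $Ann_R(N)$ being an ideal is what lets one absorb the extra factor of $s$ in the two annihilator cases. There is no genuine obstacle here.
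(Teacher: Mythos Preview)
Your argument is correct and follows essentially the same route as the paper: both proofs apply the $S$-second condition twice, first to a residual submodule $(K:_M a)$ (the paper uses $(K:_M b)$, which is symmetric) and then to $(K:_M s)$. The only difference is cosmetic: where you pass to the fixed element $s^2$, the paper instead invokes an external result (\cite[Lemma 2.13(a)]{FF22}) asserting $sN \subseteq s^2N$ so as to keep the original $s$; your version is slightly more self-contained.
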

\begin{proof}
Let $N$ be an $S$-second submodule of $M$ and $abN \subseteq K$, where $a, b \in R$ and $K$ is a submodule of $M$. Then $aN \subseteq (K:_Mb)$.
Since $N$ is an $S$-second submodule of $M$, there exists a fixed $s \in S$ such that
$sa \in Ann_R(N)$ or $sbN \subseteq K$. Now, we will show that $sbN \subseteq K$ implies that $sb \in Ann_R(N)$ or $sN \subseteq K$. Assume that $bN \subseteq (K:_Ms)$. Since $N$ is an $S$-second submodule, we get either $sb \in Ann_R(N)$ or $s^2N \subseteq K$.  If $sb \in Ann_R(N)$, then we are done. So assume that $s^2N \subseteq K$. By \cite[Lemma 2.13 (a)]{FF22}, we know that $sN \subseteq s^2N$. Thus we have $sN \subseteq K$.
\end{proof}

\begin{thm}\label{t2.5}
Let $S$ be a  m.c.s. of $R$ and $M$ be an $R$-module. Then the sum of two $S$-second submodules is an $S$-2-absorbing second submodule of $M$.
\end{thm}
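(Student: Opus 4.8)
The plan is to reduce the statement to the single-submodule situation via Lemma \ref{l02.5} and then combine the two witness elements multiplicatively. Write $N=N_1+N_2$, where $N_1$ and $N_2$ are $S$-second submodules of $M$. First I would verify the standing hypothesis that $N$ can be $S$-2-absorbing second: since $Ann_R(N)=Ann_R(N_1)\cap Ann_R(N_2)\subseteq Ann_R(N_1)$ and $Ann_R(N_1)\cap S=\emptyset$ because $N_1$ is $S$-second, we get $Ann_R(N)\cap S=\emptyset$.

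Next, apply Lemma \ref{l02.5} to $N_1$ and to $N_2$ to obtain fixed elements $s_1,s_2\in S$ such that, for each $i\in\{1,2\}$, whenever $abN_i\subseteq K$ with $a,b\in R$ and $K$ a submodule of $M$, one has $s_ia\in Ann_R(N_i)$ or $s_ib\in Ann_R(N_i)$ or $s_iN_i\subseteq K$. Set $s:=s_1s_2\in S$; this is the required fixed element, and it is uniform in $a,b,K$ because $s_1$ and $s_2$ are.

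Now suppose $abN\subseteq K$ for some $a,b\in R$ and a submodule $K$ of $M$. Then $abN_1\subseteq K$ and $abN_2\subseteq K$, so by the lemma each of $N_1,N_2$ falls into one of three cases, giving $3\times3=9$ combinations which, up to the symmetries $a\leftrightarrow b$ and $N_1\leftrightarrow N_2$, reduce to four. If $s_1aN_1=0$ and $s_2aN_2=0$ (or the same with $b$), then $saN=s_2(s_1aN_1)+s_1(s_2aN_2)=0\subseteq K$. If $s_1aN_1=0$ and $s_2bN_2=0$ (or the mirror), then $sabN=s_2b(s_1aN_1)+s_1a(s_2bN_2)=0$. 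If $s_1aN_1=0$ and $s_2N_2\subseteq K$, then $saN=s_2(s_1aN_1)+s_1a(s_2N_2)\subseteq 0+s_1aK\subseteq K$, using that $K$ is an $R$-submodule; the variants with $b$, or with the roles of $N_1,N_2$ exchanged, are identical. Finally, if $s_1N_1\subseteq K$ and $s_2N_2\subseteq K$, then $sN=s_2(s_1N_1)+s_1(s_2N_2)\subseteq K$, hence $saN\subseteq aK\subseteq K$. In every case one of $saN\subseteq K$, $sbN\subseteq K$, $sabN=0$ holds, so $N$ is an $S$-2-absorbing second submodule of $M$.

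The only real work is the bookkeeping of the nine cases; the point that makes each of them close is the decomposition $aN=aN_1+aN_2$ together with $K$ being an $R$-submodule (so $s_1aK\subseteq K$, etc.), and that the single element $s=s_1s_2$ simultaneously activates the Lemma \ref{l02.5} witnesses for both $N_1$ and $N_2$. I do not anticipate any genuine obstacle; the mild point to be careful about is that $s$ works uniformly for all $a,b,K$, which is automatic since $s_1,s_2$ are already uniform.
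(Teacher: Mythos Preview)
Your argument is correct and follows essentially the same route as the paper: apply Lemma \ref{l02.5} to each $N_i$ to get fixed $s_1,s_2\in S$, set $s=s_1s_2$, and then combine the resulting trichotomies. Your treatment is in fact more complete than the paper's, since you explicitly verify $Ann_R(N)\cap S=\emptyset$ and work through all the case combinations, whereas the paper handles only one representative case under a ``without loss of generality'' that hides the fact that different combinations yield different conclusions among $saN\subseteq K$, $sbN\subseteq K$, $sabN=0$.
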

\begin{proof}
Let $N_1, N_2$ be two $S$-second submodules of $M$ and $N=N_1+N_2$. Let
$abN \subseteq K$ for some $a, b \in R$ and submodule $K$ of $M$. Since $N_1$ is an $S$-second submodule
submodule of $M$, there exists a fixed $s_1\in S$ such that $s_1a \in Ann_R(N_1)$ or $s_1b \in Ann_R(N_1)$ or $s_1N_1 \subseteq K$ by Lemma \ref{l02.5}. Also, as $N_2$ is an $S$-second submodule of $M$, there exists a fixed
$s_2\in S$ such that $s_2a \in Ann_R(N_2)$ or $s_2b \in Ann_R(N_2)$ or $s_2N_2 \subseteq K$ by Lemma \ref{l02.5}.
Without loss of generality, we may assume that $s_1a \in Ann_R(N_1)$ and $s_2N_2 \subseteq K$. Now,
put $s = s_1s_2 \in S$. This implies that $saN \subseteq K$ and hence $N$ is an S-2-absorbing second
submodule of $M$.
\end{proof}

The following example shows that sum of two $S$-2-absorbing second submodules is not necessarily $S$-2-absorbing second submodule.
\begin{ex}
Consider $M=\Bbb Z_{p^n} \oplus \Bbb Z_{q^n}$ as $\Bbb Z$-module, where $n \in \Bbb N$ and $p, q$ are distinct prime numbers. Set $S =\{x\in \Bbb Z: gcd(x, pq) = 1\}$. Then $S$ is a m.c.s. of $\Bbb Z$. One can see that $\Bbb Z_{p^n}\oplus 0$ and $0 \oplus \Bbb Z_{q^n}$ both are $S$-2-absorbing second submodules. However $p^nM \subseteq 0 \oplus \Bbb Z_{q^n}$, $p^{n-1}xM \not \subseteq 0 \oplus \Bbb Z_{q^n}$,  $pxM \not \subseteq 0 \oplus \Bbb Z_{q^n}$, and $xp^nM \not =0$ implies that $M$ is not an $S$-2-absorbing second $\Bbb Z$-module.
\end{ex}

Let $M$ be an $R$-module. The idealization $R(+)M =\{(a,m): a \in R, m \in  M\}$ of $M$ is
a commutative ring whose addition is componentwise and whose multiplication is defined as $(a,m)(b,\acute{m}) =
(ab, a\acute{m} + bm)$ for each $a, b \in R$, $m, \acute{m}\in M$ \cite{Na62}. If $S$ is a m.c.s. of $R$ and $N$ is a submodule of $M$, then $S(+)N = \{(s, n): s \in S, n \in N\}$ is a m.c.s. of $R(+)M$ \cite{DD02}.

\begin{prop}\label{p2.18}
Let $M$ be an $R$-module and let $I$ be an ideal of $R$ such that $I \subseteq Ann_R(M)$.  Then the following are equivalent:
\begin{itemize}
\item [(a)] $I$ is a strongly 2-absorbing second ideal of $R$;
\item [(b)] $I(+)0$ is a strongly 2-absorbing second ideal of $R(+)M$.
\end{itemize}
\end{prop}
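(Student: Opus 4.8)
The plan is to run both implications through the characterisation of strongly 2-absorbing second submodules recorded in \cite[Theorem 3.3]{AF16} (the $s=1$ analogue of Theorem~\ref{t2.3}(b)): a non-zero submodule $N$ of a module over any ring is strongly 2-absorbing second if and only if $abN=aN$ or $abN=bN$ or $abN=0$ for all ring elements $a,b$. The one structural computation I would carry out first is the effect of multiplying $I(+)0$ by an element of $R(+)M$. Because $I\subseteq Ann_R(M)$, for $x\in I$, $a\in R$, $m\in M$ we have $(a,m)(x,0)=(ax,\,mx)=(ax,0)$ since $mx=0$; so $I(+)0$ is genuinely an ideal of $R(+)M$, and for all $(a,m),(b,m')\in R(+)M$,
\[
(a,m)(b,m')\bigl(I(+)0\bigr)=(abI)(+)0,\qquad (a,m)\bigl(I(+)0\bigr)=(aI)(+)0,\qquad (b,m')\bigl(I(+)0\bigr)=(bI)(+)0 .
\]

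Next I would use the trivial fact that for additive subgroups $J,J'$ of $R$ one has $J(+)0=J'(+)0$ iff $J=J'$ and $J(+)0=0$ iff $J=0$; in particular $I(+)0\neq 0$ precisely when $I\neq 0$, so the non-triviality clause built into the notion ``strongly 2-absorbing second'' matches on the two sides. Feeding the three identities above into the characterisation applied to $I(+)0$ as a submodule of the $R(+)M$-module $R(+)M$, it becomes: for all $(a,m),(b,m')\in R(+)M$, $abI=aI$ or $abI=bI$ or $abI=0$. The module components $m,m'$ are inert and $a,b$ range over all of $R$, so this is verbatim the characterisation of $I$ being strongly 2-absorbing second as an ideal of $R$; hence (a) $\Leftrightarrow$ (b).

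If one prefers to bypass the characterisation theorem, the same computation gives a direct proof of (b) $\Rightarrow$ (a): given an ideal $K$ of $R$ with $abI\subseteq K$, note that $K(+)M$ is an ideal of $R(+)M$ with $(a,0)(b,0)\bigl(I(+)0\bigr)=(abI)(+)0\subseteq K(+)M$, so strong 2-absorbing secondness of $I(+)0$ yields $(aI)(+)0\subseteq K(+)M$, $(bI)(+)0\subseteq K(+)M$, or $(ab,0)\in Ann_{R(+)M}(I(+)0)$, i.e. $aI\subseteq K$, $bI\subseteq K$, or $ab\in Ann_R(I)$. For (a) $\Rightarrow$ (b) I would instead use the characterisation route, because the main (and only mildly delicate) obstacle is that an arbitrary ideal $L$ of $R(+)M$ containing $(abI)(+)0$ need not contain $(aI)(+)0$ merely because the $R$-part of $L$ contains $aI$; routing the argument through the equalities $abN=aN$, $abN=bN$, $abN=0$ avoids lifting submodules back and forth and makes this subtlety disappear.
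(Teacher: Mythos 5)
Your argument is correct. The paper gives no proof here (it just says ``This is straightforward''), and your verification --- the identity $(a,m)(b,m')\bigl(I(+)0\bigr)=(abI)(+)0$ forced by $I\subseteq Ann_R(M)$, fed into the characterisation $abN=aN$ or $abN=bN$ or $abN=0$ from \cite[Theorem 3.3]{AF16}, together with the observation that $I(+)0\neq 0$ iff $I\neq 0$ --- is exactly the routine check the author is leaving to the reader.
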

\begin{proof}
This is straightforward.
\end{proof}

\begin{thm}\label{t2.19}
Let $S$ be a m.c.s. of $R$, $M$ be an $R$-module, and $I$ be an ideal of $R$ such that $I \subseteq Ann_R(M)$ and $I \cap S=\emptyset$. Then the following are equivalent:
\begin{itemize}
\item [(a)] $I$ is an $S$-2-absorbing second ideal of $R$;
\item [(b)] $I(+)0$ is an $S(+)0$-2-absorbing second ideal of $R(+)M$;
\item [(c)] $I(+)0$ is an $S(+)M$-2-absorbing second ideal of $R(+)M$.
\end{itemize}
\end{thm}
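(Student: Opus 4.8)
The plan is to prove $(a)\Leftrightarrow(b)$ by unwinding the idealization product, and then to deduce $(b)\Leftrightarrow(c)$ from the localization results already in hand (Proposition \ref{p12.2}).

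The computational backbone is the identity
$$x_1x_2\,(I(+)0)=\bigl(\pi(x_1)\pi(x_2)I\bigr)(+)0\qquad(x_1,x_2\in R(+)M),$$
where $\pi:R(+)M\to R,\ (r,m)\mapsto r$, is the canonical projection; this holds precisely because $iM=0$ for every $i\in I$, so the module components of all the products in sight vanish. The same observation gives $Ann_{R(+)M}(I(+)0)=Ann_R(I)(+)M$, so the three well-definedness conditions $Ann_{R(+)M}(I(+)0)\cap(S(+)0)=\emptyset$, $Ann_{R(+)M}(I(+)0)\cap(S(+)M)=\emptyset$ and $Ann_R(I)\cap S=\emptyset$ are all equivalent. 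I would also record two transfer facts: for an ideal $\mathcal K$ of $R(+)M$ the contraction $\mathcal K_0:=\{r\in R:(r,0)\in\mathcal K\}$ is an ideal of $R$, and for an ideal $K$ of $R$ the set $K(+)M$ is an ideal of $R(+)M$ (note that $K(+)0$ need not be, since in general $K\not\subseteq Ann_R(M)$).

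For $(a)\Rightarrow(b)$: if $s\in S$ is a witness for $I$ in $R$, I claim $(s,0)$ witnesses $I(+)0$ in $R(+)M$. Given $x_1x_2(I(+)0)\subseteq\mathcal K$, the identity gives $\pi(x_1)\pi(x_2)I\subseteq\mathcal K_0$; applying the hypothesis on $I$ yields $s\pi(x_1)I\subseteq\mathcal K_0$ or $s\pi(x_2)I\subseteq\mathcal K_0$ or $s\pi(x_1)\pi(x_2)I=0$, and pushing each alternative back through the identity gives $(s,0)x_1(I(+)0)\subseteq\mathcal K$ or $(s,0)x_2(I(+)0)\subseteq\mathcal K$ or $(s,0)x_1x_2(I(+)0)=0$. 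For $(b)\Rightarrow(a)$: the fixed element witnessing $I(+)0$ lies in $S(+)0$, hence has the form $(s,0)$; given $abI\subseteq K$ for an ideal $K$ of $R$, apply the hypothesis to $(a,0)(b,0)(I(+)0)=(abI)(+)0\subseteq K(+)M$ and read off first components, using that $K(+)M$ is an ideal.

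For $(b)\Leftrightarrow(c)$: since $S(+)0\subseteq S(+)M$ and $(s,m)(s,-m)=(s^2,0)\in S(+)0$ for every $(s,m)\in S(+)M$, the fraction $(s,m)/(1,0)$ is a unit of $(S(+)0)^{-1}(R(+)M)$; hence $S(+)0\subseteq S(+)M\subseteq(S(+)0)^{*}$, and by idempotence and monotonicity of saturation $(S(+)M)^{*}=(S(+)0)^{*}$. Applying Proposition \ref{p12.2}(c) to the multiplicatively closed sets $S(+)0$ and $S(+)M$ in turn shows that each of (b), (c) is equivalent to $I(+)0$ being an $(S(+)0)^{*}$-2-absorbing second ideal of $R(+)M$, hence to the other. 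I expect the only real (and modest) obstacle to be the bookkeeping in $(a)\Rightarrow(b)$: one must resist assuming that $\mathcal K$ splits as $K(+)N$ and instead work with the contraction $\mathcal K_0$, and one must verify the product identity carefully enough to be certain the $M$-components genuinely disappear at every step; everything else is routine.
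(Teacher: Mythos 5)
Your proof is correct, and it takes a genuinely different (and cleaner) route than the paper's. The paper works through the characterization of Theorem \ref{t2.3}(b) (the identities $s^2abI=s^2aI$, etc.) and verifies them inside $R(+)M$ by explicit element computations, separately for $(a)\Rightarrow(b)$ and $(c)\Rightarrow(a)$, while getting $(b)\Rightarrow(c)$ from Proposition \ref{p12.2}(b) and $S(+)0\subseteq S(+)M$. You instead argue directly from the definition: the single identity $x_1x_2(I(+)0)=(\pi(x_1)\pi(x_2)I)(+)0$, valid because $I\subseteq Ann_R(M)$ kills every module component in sight, packages all of the paper's hand computations at once, and your use of the contraction $\mathcal{K}_0$ and the extension $K(+)M$ correctly handles the fact that ideals of $R(+)M$ need not split as $K(+)N$ --- a point the paper sidesteps by working with the equational characterization rather than with arbitrary ideals. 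Your treatment of $(b)\Leftrightarrow(c)$ via $(s,m)(s,-m)=(s^2,0)$, hence $S(+)M\subseteq(S(+)0)^*$ and $(S(+)M)^*=(S(+)0)^*$, combined with Proposition \ref{p12.2}(c), is more conceptual and yields both directions simultaneously, whereas the paper gets only $(b)\Rightarrow(c)$ from containment of multiplicative sets and must close the cycle with a separate computation for $(c)\Rightarrow(a)$. One further merit of your write-up: you make explicit that $Ann_{R(+)M}(I(+)0)=Ann_R(I)(+)M$, so the three nonemptiness conditions required for the three statements to make sense are equivalent; the paper leaves this unverified.
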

\begin{proof}
$(a)\Rightarrow (b)$
Let $(a, m ), (b, \acute{m} ) \in R(+)M$. As $I$ is an $S$-2-absorbing second ideal of $R$, there exists a fixed $s \in S$ such that $abs^2I=as^2I$ or $abs^2I=bs^2I$ or $abs^3I=0$.
If  $abs^3I=0$, then $(a,m)(b,\acute{m})(s^3,0)(I(+)0)=0$. If $abs^2I=as^2I$, then we claim that $(a,m)(b,\acute{m})(s^2,0)(I(+)0)=(a,m)(s^2,0)(I(+)0)$. To see this let
$(s^2x,0)(a,m)=(s^2,0)(x,0)(a,m) \in (s^2,0)(a,m)(I(+)0)$. As $abs^2I=as^2I$, we have $s^2ax=abs^2y$ for some $y \in I$. Thus as $y \in I \subseteq Ann_R(M)$,

$$
(s^2,0)(x,0)(a,m)=(s^2xa,0)=(abs^2y,0)=(s^2,0)(y,0)(a,m)(b,\acute{m})
$$
Hence,
$(s^2,0)(x,0)(a,0) \in (a,m)(b,\acute{m})(s^2,0)(I(+)0)$ and so $(a,m)(b,\acute{m})(s^2,0)(I(+)0)=(a,m)(s^2,0)(I(+)0)$.
Since the inverse inclusion is clear we reach the claim.

$(b)\Rightarrow (c)$
Since $S(+)0 \subseteq S(+)M$, the result follows from Proposition \ref{p12.2} (b).

$(c)\Rightarrow (a)$
Let $a,b \in R$. As $I(+)0$ is an $S(+)M$-2-absorbing second ideal of $R(+)M$, there exists a fixed $(s, m) \in S(+)M$ such that
$$
(a,0)(b,0)(s,m)^2(I(+)0)=(a,0)(s,m)^2(I(+)0)
$$
or
$$
(a,0)(b,0)(s,m)^2(I(+)0)=(b,0)(s,m)^2(I(+)0)
$$
or
$$
(a,0)(b,0)(s,m)^3(I(+)0)=0.
$$
If $(ab,0)(s,m)^2(I(+)0)=0$, then for each $abs^2x \in abs^2I$ we have
$$
0=(ab,0)(s,m)^2(x,0)=(ab,0)(s^2,2sm)(x,0)=(abs^2,2absm)(x,0)
$$
$$
=(abs^2,0)(x,0)=(abs^2x,0).
$$
Thus $abs^2I=0$. If
$(ab,0)(s,m)^2(I(+)0)=(a,0)(s,m)^2(I(+)0)$, then we claim that $abs^2I=as^2I$. To see this, let
$s^2xa\in s^2aI$. Then for some $y \in I$, as $x \in I\subseteq Ann_R(M)$ we have
$$
(s^2ax,0)=(s^2ax,2sxm)=(s,m)^2(a,0)(x,0)=(s,m)^2(aby,0)
$$
$$
=(s^2aby,2sabmy)=(s^2aby,0).
$$
Hence,
$s^2ax \in abs^2I$ and so $s^2aI \subseteq s^2abI$ Thus  $s^2aI = s^2abI$. Similarly, if $(ab,0)(s,m)^2(I(+)0)=(b,0)(s,m)^2(I(+)0)$, then $s^2bI \subseteq s^2abI$, and so $s^2bI=s^2abI$.
\end{proof}

Let $R_i$ be a commutative ring with identity, $M_i$ be an $R_i$-module for each $i = 1, 2,..., n$, and $n \in \Bbb N$. Assume that
$M = M_1\times M_2\times ...\times M_n$ and $R = R_1\times R_2\times ...\times R_n$. Then $M$ is clearly
an $R$-module with componentwise addition and scalar multiplication. Also,
if $S_i$ is a multiplicatively closed subset of $R_i$ for each $i = 1, 2,...,n$,  then
$S = S_1\times S_2\times ...\times S_n$ is a multiplicatively closed subset of $R$. Furthermore,
each submodule $N$ of $M$ is of the form $N = N_1\times N_2\times...\times N_n$, where $N_i$ is a
submodule of $M_i$ for each $i = 1, 2,..., n$.
\begin{thm}\label{t11.15}
Let $R = R_1 \times R_2$ and $S = S_1\times S_2$ be a  m.c.s. of
$R$, where $R_i$ is a commutative ring
with $1\not= 0$ and $S_i$ is a  m.c.s. of $R_i$ for each $i = 1, 2$. Let $M = M_1 \times M_2$
be an $R$-module, where $M_1$ is an $R_1$-module and $M_2$ is an $R_2$-module. Suppose that $N = N_1 \times N_2$ is a  submodule of $M$. Then the following conditions are equivalent:
\begin{itemize}
  \item [(a)] $N$ is an $S$-2-absorbing second submodule of $M$;
  \item [(b)] Either  $Ann_{R_1}(N_1) \cap S_1\not=\emptyset$ and $N_2$ is a $S_2$-2-absorbing second submodule of $M_2$ or  $Ann_{R_2}(N_2) \cap S_2\not=\emptyset$ and $N_1$ is a $S_1$-2-absorbing second submodule of $M_1$ or $N_1$ is an $S_1$-second submodule of $M_1$ and $N_2$ is an $S_2$-second submodule of $M_2$ .
\end{itemize}
\end{thm}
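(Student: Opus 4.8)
The plan is to exploit the coordinatewise structure of everything in sight: a submodule $K$ of $M=M_1\times M_2$ has the form $K=K_1\times K_2$, scalar multiplication by $(a_1,a_2)$ acts componentwise, and $Ann_R(N)=Ann_{R_1}(N_1)\times Ann_{R_2}(N_2)$. In particular the standing requirement that $N$ be an $S$-2-absorbing second submodule carries with it $Ann_R(N)\cap S=\emptyset$, which unwinds to: it is \emph{not} the case that both $Ann_{R_1}(N_1)\cap S_1\neq\emptyset$ and $Ann_{R_2}(N_2)\cap S_2\neq\emptyset$. The trichotomy (i) $Ann_{R_1}(N_1)\cap S_1\neq\emptyset$, (ii) $Ann_{R_2}(N_2)\cap S_2\neq\emptyset$, (iii) both intersections empty, is what organizes the whole argument and matches the three alternatives in (b).

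For $(a)\Rightarrow(b)$ I would fix the witness $s=(s_1,s_2)\in S$ for $N$ and handle the three cases. In case (i) (so automatically $Ann_{R_2}(N_2)\cap S_2=\emptyset$), given $a_2b_2N_2\subseteq K_2$ in $M_2$, I would test the $S$-2-absorbing second condition of $N$ on $(1,a_2)(1,b_2)N\subseteq M_1\times K_2$; reading the second coordinate of each of the three possible conclusions gives $s_2a_2N_2\subseteq K_2$ or $s_2b_2N_2\subseteq K_2$ or $s_2a_2b_2N_2=0$, so $N_2$ is $S_2$-2-absorbing second with witness $s_2$; case (ii) is symmetric. In case (iii), to see that $N_1$ is $S_1$-second I would start from $r_1N_1\subseteq K_1$ and test the condition on $(r_1,1)(1,0)N\subseteq K_1\times 0$: the alternative $saN\subseteq K$ would force $s_2N_2=0$, impossible since $Ann_{R_2}(N_2)\cap S_2=\emptyset$, so one of the other two alternatives yields $s_1N_1\subseteq K_1$ or $s_1r_1N_1=0$; symmetrically $N_2$ is $S_2$-second.

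For $(b)\Rightarrow(a)$ I would first check that $Ann_R(N)\cap S=\emptyset$ in each of the three cases, then argue directly. In case (i), with $s_1\in Ann_{R_1}(N_1)\cap S_1$ and $s_2$ a witness for $N_2$ being $S_2$-2-absorbing second, set $s=(s_1,s_2)$; given $abN\subseteq K$, apply the $S_2$-condition to $a_2b_2N_2\subseteq K_2$ and combine each outcome with $s_1N_1=0$ to obtain $saN\subseteq K$, $sbN\subseteq K$, or $sabN=0$ respectively (case (ii) symmetric). In case (iii) the cleanest route is to observe that $N_1\times 0$ and $0\times N_2$ are themselves $S$-second submodules of $M$ — using the $S_i$-second witness in the relevant coordinate and $1$ in the other, and noting that their annihilators meet $S$ trivially — so that $N=(N_1\times 0)+(0\times N_2)$ is a sum of two $S$-second submodules of $M$ and hence $S$-2-absorbing second by Theorem \ref{t2.5} (sum of two $S$-second submodules). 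Alternatively, one can run a $3\times 3$ case analysis on the conclusions of Lemma \ref{l02.5} applied in each coordinate; using that each $K_i$ is a submodule (so $a_iK_i\subseteq K_i$), each of the nine combinations is checked to land in one of the three required alternatives.

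The coordinate-chasing is routine; the only point requiring care is confirming that the trichotomy in $(a)\Rightarrow(b)$ is genuinely exhaustive and that the \emph{same} fixed $s=(s_1,s_2)$ simultaneously witnesses whichever component statement is being extracted — which it does, precisely because in each sub-case the two discarded alternatives are ruled out by the hypothesis $Ann_R(N)\cap S=\emptyset$. I expect the main (mild) obstacle to be case (iii) of $(b)\Rightarrow(a)$: recognizing the ``sum of two $S$-second submodules'' shape (or, failing that, correctly bookkeeping the nine sub-cases) is the one step that is not a one-line coordinate projection.
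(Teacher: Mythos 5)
Your proposal is correct, and its skeleton coincides with the paper's: the same trichotomy driven by $Ann_R(N)\cap S=\emptyset$, and the same coordinate tests with elements such as $(1,a_2)(1,b_2)$ acting on $N_1\times N_2$ inside $M_1\times K_2$. Two local steps differ. First, in $(a)\Rightarrow(b)$, case (iii), you extract the $S_1$-second property directly by testing $(r_1,1)(1,0)N\subseteq K_1\times 0$ and discarding the alternative that would force $s_2N_2=0$; the paper instead argues by contradiction, choosing a completely irreducible submodule $L_2$ with $s_2N_2\not\subseteq L_2$ and testing against $K_1\times L_2$. Your version is slightly more economical since the zero submodule already witnesses $s_2N_2\not\subseteq 0$, and it avoids the machinery of Remark \ref{r2.1}. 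Second, in $(b)\Rightarrow(a)$ when both $N_i$ are $S_i$-second, you check that $N_1\times 0$ and $0\times N_2$ are $S$-second submodules of $M$ (their annihilators miss $S$ precisely because each $Ann_{R_i}(N_i)\cap S_i=\emptyset$) and then invoke Theorem \ref{t2.5} on $N=(N_1\times 0)+(0\times N_2)$; the paper instead runs the explicit case analysis through Lemma \ref{l02.5} in each coordinate, which is what your fallback ``nine sub-cases'' alternative amounts to. The reduction to Theorem \ref{t2.5} is a genuinely different and cleaner route for that sub-case: it makes the structural reason visible (a product of two second-type components is a sum of two $S$-second submodules), at the cost of depending on Theorem \ref{t2.5} and Lemma \ref{l02.5} rather than being self-contained. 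Everything you leave as ``routine coordinate-chasing'' does indeed go through, including the point you flag about the single fixed witness $s=(s_1,s_2)$ serving all sub-cases.
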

\begin{proof}
$(a)\Rightarrow (b)$
Let $N = N_1 \times N_2$ be a $S$-2-absorbing second submodule of $M$. Then $Ann_R(N)= Ann_{R_1}(N_1)\times Ann_{R_2}(N_2)$ is an $S$-2-absorbing ideal of $R$ by Proposition \ref{p1.6} (a). Thus, either $Ann_R(N_1) \cap S_1 = \emptyset$ or  $Ann_R(N_2) \cap S_2 = \emptyset$. Assume that $Ann_R(N_1) \cap S_1\not = \emptyset$. We show that
$N_2$ is an $S_2$-2-absorbing second submodule of $M_2$. To see this, let $t_2r_2N_2 \subseteq K_2$ for some $t_2,r_2 \in R_2$ and a submodule $K_2$ of $M_2$. Then $(1,t_2)(1,r_2)(N_1 \times N_2) \subseteq M_1 \times K_2$. As $N$ is an $S$-2-absorbing second submodule of $M$, there exists a fixed $(s_1,s_2) \in S$ such that $(s_1,s_2)(1,r_2)(N_1 \times N_2) \subseteq M_1 \times K_2$ or $(s_1,s_2)(1,t_2)(N_1 \times N_2) \subseteq M_1 \times K_2$ or $(s_1,s_2)(1,t_2)(1,r_2)(N_1 \times N_2)=0$. It follows that either $s_2r_2N_2 \subseteq K_2$ or $s_2t_2N_2 \subseteq K_2$ or $s_2t_2r_2N_2=0$ and so $N_2$ is an $S_2$-2-absorbing second submodule of $M_2$. Similarly, if $Ann_{R_2}(N_2) \cap S_2\not=\emptyset$, then one can see that $N_1$ is an $S_1$-2-absorbing second submodule of $M_1$. Now assume that $Ann_R(N_1) \cap S_1 = \emptyset$ and $Ann_R(N_2) \cap S_2 = \emptyset$. We will show that $N_1$ is an $S_1$-second submodule of $M_1$ and $N_2$ is an $S_2$-second submodule of $M_2$.
First, note that there exists a fixed $s = (s_1, s_2) \in S$ satisfying $N$ to be an $S$-2-absorbing second submodule
of $M$. Suppose that $N_1$ is not an $S_1$-second submodule of $M_1$. Then
there exists $a \in R_1$ and a submodule $K_1	$ of $M_1$ such that $aN_1 \subseteq K_1$ but $s_1a \not \in Ann_R(N_1)$ and $s_1N_1 \not \subseteq K_1$. On the other hand $Ann_R(N_2) \cap S_2 = \emptyset$ and $s_2 \not\in Ann_R(N_2)$ so that $s_2N_2\not=0$. Thus by Remark \ref{r2.1}, there exists a completely irreducible submodule $L_2$ of $M_2$ such that $s_2N_2 \not \subseteq L_2$. Also note that
$$
(a, 1)(1, 0)N=(a, 1)(1, 0)(N_1 \times N_2)=aN_1 \times 0\subseteq K_1 \times 0\subseteq  K_1 \times L_2.
$$
Since $N$ is an $S$-2-absorbing second submodule of $M$, we have either
$(s_1, s_2)(1, 0)N\subseteq K_1 \times L_2$ or $(s_1, s_2)(a, 1)N\subseteq K_1 \times L_2$ or $(s_1, s_2)(a, 1)(1, 0)N=0$.
Hence, we conclude that either $s_1N_1\subseteq K_1$ or $s_2aN_2\subseteq L_2$ or $s_1aN_1=0$,
which them are contradictions. Thus, $N_1$ is
an $S_1$-second submodule of $M_1$.  Similar argument shows that $N_2$ is an $S_2$-second submodule of $M_2$.

$(b)\Rightarrow (a)$
Assume that $N_1$ is an $S_1$-2-absorbing second submodule of $M_1$ and $Ann_{R_2}(N_2) \cap S_2\not=\emptyset$.
we will show that $N$ is an $S$-2-absorbing second submodule of $M$.
Then there exists an $s_2 \in Ann_{R_2}(N_2) \cap S_2$.  Let $(r_1, r_2)(t_1,t_2)(N_1 \times N_2) \subseteq K_1 \times K_2$ for some $t_i, r_i \in R_i$ and submodule $K_i$ of $M_i$, where $i = 1, 2$. Then $r_1t_1N_1 \subseteq K_1$.  As $N_1$ is an
$S_1$-2-absorbing second submodule of $M_1$,  there exists a fixed $s_1 \in S_1$ such that $s_1r_1N_1 \subseteq K_1$ or  $s_1t_1N_1 \subseteq K_1$ or $s_1r_1t_1N_1 =0$.
Now we set $s =(s_1, s_2)$. Then $s(r_1, r_2)(N_1 \times N_2) \subseteq K_1 \times K_2$ or $s(t_1, t_2)(N_1 \times N_2) \subseteq K_1 \times K_2$ or $s(r_1, r_2)(t_1, t_2)(N_1 \times N_2)=0$.
Therefore, $N$ is an $S$-2-absorbing second submodule of $M$. Similarly one can show that if $N_2$ is an $S_2$-2-absorbing second submodule of $M_2$ and $Ann_{R_1}(N_1) \cap S_1\not=\emptyset$, then $N$ is an $S$-2-absorbing second submodule of $M$.
Now assume that $N_1$ is an $S_1$-second submodule of $M_1$ and $N_2$ is an $S_2$-second submodule of $M_2$ . Let $a, b \in R_1$, $x, y \in R_2$, $K_1$ is a submodule of $M_1$ and $K_2$ is a submodule of $M_2$ such that
$$
(a, x)(b, y)N=(a, x)(b, y)(N_1 \times N_2 \subseteq K_1 \times K_2.
$$
Then we have $abN_1 \subseteq K_1$ and $xyN_2 \subseteq K_2$.  Since $N_1$ is an $S_1$-second submodule of $M_1$, there exists a fixed $s_1 \in S_1$ such that either $s_1a \in Ann_{R_1}(N_1)$ or $s_1b \in Ann_{R_1}(N_1)$
or $s_1N_1 \subseteq K_1$ by Lemma \ref{l02.5}. Similarly, there exists a fixed $s_2 \in S_2$ such that either $s_2x \in Ann_{R_2}(N_2)$ or $s_2y \in Ann_{R_2}(N_2)$
or $s_2N_2 \subseteq K_2$ by Lemma \ref{l02.5}. Also without loss of generality, we may assume that $s_1a \in Ann_{R_1}(N_1)$ and
$s_2N_2 \subseteq K_2$ or $s_1a \in Ann_{R_1}(N_1)$ and $s_2x \in Ann_{R_2}(N_2)$ or $s_1N_1 \subseteq K_1$ and  $s_2N_2 \subseteq K_2$. If $s_1a \in Ann_{R_1}(N_1)$ and
$s_2N_2 \subseteq K_2$, then we have
$$
(s_1,s_2)(a, x)(N_1\times N_2 )=s_1aN_1\times s_2xN_2 \subseteq 0 \times K_2\subseteq K_1 \times K_2.
$$
If $s_1a \in Ann_{R_1}(N_1)$ and $s_2x \in Ann_{R_2}(N_2)$, then $(s_1,s_2)(a, x)(b,y)(N_1\times N_2 )=0$.
If $s_1N_1 \subseteq K_1$ and  $s_2N_2 \subseteq K_2$, then
$$
(s_1,s_2)(a, x)(b,y)(N_1\times N_2 )\subseteq (s_1,s_2)N\subseteq K_1 \times K_2.
$$
Hence, $N$ is an $S$-2-absorbing second submodule of $M$.
\end{proof}

The following example shows that if $N_1$ is an $S_1$-2-absorbing second
submodule of $M_1$ and $N_2$ is an $S_2$-2-absorbing second submodule of $M_2$, then $N_1\times N_2$
may not be an $S_1\times S_2$-2-absorbing second submodule of $M_1 \times M_2$ in general.
\begin{ex}
Consider the $\Bbb Z$-modules $M_1 = \Bbb Z_9$ and $M_2 =\Bbb Z_4$. Let $S_1 = \Bbb Z \setminus 3\Bbb Z$ and $S_2 = \Bbb Z \setminus 2\Bbb Z$. Then $M_1$ and $M_2$ are $S_1$ and $S_2$-2-absorbing second modules (see Example \ref{e24.1}). But $M=M_1\times M_2$ is not an
$S=S_1\times S_2$-2-absorbing second module since $(1, 2)(3, 1) M\subseteq  \bar{3}\Bbb Z_9\times \bar{2}\Bbb Z_4$
but for each $s = (s_1, s_2) \in S$, $s(3, 1) M\not\subseteq   \bar{3}\Bbb Z_9\times \bar{2}\Bbb Z_4$, $s(1, 2) N\not\subseteq   \bar{3}\Bbb Z_9\times \bar{2}\Bbb Z_4$, and $s(1, 2)(3, 1)M\not =0$.
\end{ex}

\begin{thm}\label{t2.7}
Let  $M = M_1 \times M_2\times ... \times M_n$ be an $R = R_1 \times R_2\times ...\times R_n$-module and $S = S_1\times S_2\times ... \times S_n$ be a  m.c.s. of
$R$, where $M_i$ is an $R_i$-module and $S_i$ is a m.c.s. of $R_i$ for each $i = 1, 2,...,n$. Let $N=N_1\times N_2\times ... \times N_n$ be a submodule of $M$. Then the following are equivalent:
\begin{itemize}
\item [(a)] $N$ is an $S$-2-absorbing second submodule of $M$;
\item [(b)] $N_k$ is an $S_k$-2-absorbing second submodule of $M_k$ for some $k \in \{1, 2, . . . , n\}$ and
$Ann_{R_t}(N_t)\cap S_t\not = \emptyset$ for each  $t \in \{1, 2, . . . , n\} \setminus \{k\}$ or $N_{k_1}$ is an $S_{k_1}$-second submodule of $M_{k_1}$ and $N_{k_2}$ is an $S_{k_2}$-second submodule of $M_{k_2}$ for some $k_1,k_2 \in \{1, 2, . . . , n\}$ ($k_1 \not=k_2$) and $Ann_{R_t}(N_t)\cap S_t\not = \emptyset$ for each  $t \in \{1, 2, . . . , n\} \setminus \{k_1,k_2\}$.
\end{itemize}
\end{thm}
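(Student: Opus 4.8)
The plan is to induct on $n$, with Theorem \ref{t11.15} serving as the base case $n=2$. For the inductive step write $M=M_1\times M''$ with $M''=M_2\times\dots\times M_n$, and correspondingly $R=R_1\times R''$, $S=S_1\times S''$, $N=N_1\times N''$, where $R''=R_2\times\dots\times R_n$, $S''=S_2\times\dots\times S_n$, $N''=N_2\times\dots\times N_n$. Applying Theorem \ref{t11.15} to the two-factor decomposition $M=M_1\times M''$ tells us that $N$ is an $S$-2-absorbing second submodule of $M$ precisely when one of the following holds: (i) $Ann_{R_1}(N_1)\cap S_1\neq\emptyset$ and $N''$ is an $S''$-2-absorbing second submodule of $M''$; (ii) $Ann_{R''}(N'')\cap S''\neq\emptyset$ and $N_1$ is an $S_1$-2-absorbing second submodule of $M_1$; (iii) $N_1$ is an $S_1$-second submodule of $M_1$ and $N''$ is an $S''$-second submodule of $M''$.

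Two elementary facts convert these clauses into statements about the individual factors. First, since $Ann_{R''}(N'')=Ann_{R_2}(N_2)\times\dots\times Ann_{R_n}(N_n)$ and $S''=S_2\times\dots\times S_n$, we have $Ann_{R''}(N'')\cap S''\neq\emptyset$ if and only if $Ann_{R_t}(N_t)\cap S_t\neq\emptyset$ for every $t\in\{2,\dots,n\}$. Second, I would record the auxiliary fact that a product $L_1\times\dots\times L_m$ is an $(S_1\times\dots\times S_m)$-second submodule of $P_1\times\dots\times P_m$ if and only if there is an index $j$ with $L_j$ an $S_j$-second submodule of $P_j$ and $Ann_{R_i}(L_i)\cap S_i\neq\emptyset$ for all $i\neq j$; this can be proved directly using the standard idempotents of $R$ as test elements together with suitable choices of $K$, exactly as in case (iii) of the proof of Theorem \ref{t11.15}, or be reduced to the case $m=2$ by a secondary induction.

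With these in hand the inductive step is bookkeeping. In case (ii), the first fact rewrites the clause as ``$N_1$ is $S_1$-2-absorbing second and $Ann_{R_t}(N_t)\cap S_t\neq\emptyset$ for all $t\neq 1$'', which is the first alternative of (b) with $k=1$. In case (i), apply the induction hypothesis to $N''$; combining $Ann_{R_1}(N_1)\cap S_1\neq\emptyset$ with the $(n-1)$-factor form of (b) yields exactly those instances of (b) in which the index $1$ belongs to the set of factors whose annihilator meets $S$ --- the first alternative with $k\in\{2,\dots,n\}$, and the second alternative with $k_1,k_2\in\{2,\dots,n\}$. In case (iii), feed $N''$ to the auxiliary product characterization: it produces an index $k_2\in\{2,\dots,n\}$ with $N_{k_2}$ an $S_{k_2}$-second submodule and $Ann_{R_t}(N_t)\cap S_t\neq\emptyset$ for the remaining $t\in\{2,\dots,n\}$, and together with $N_1$ being $S_1$-second this is the second alternative of (b) with distinguished pair $\{1,k_2\}$. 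The union of the outcomes of (i)--(iii) realizes both alternatives of (b) for every admissible choice of distinguished index (or pair of indices), and conversely each instance of (b) falls into one of the three cases, giving (a) $\Leftrightarrow$ (b). One also checks that in every instance of (b) some factor $N_i$ is $S_i$-(2-absorbing-)second, so $Ann_{R_i}(N_i)\cap S_i=\emptyset$ and hence $Ann_R(N)\cap S=\emptyset$, consistent with (a).

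I expect the only genuine difficulty to be organizational: tracking, through the iterated two-factor splitting, which component plays the role of the single ``$S$-2-absorbing-second'' factor and which two components play the role of the ``$S$-second'' factors, and matching these against the two alternatives in (b). The one substantive ingredient not already in the excerpt is the auxiliary characterization of $S$-second submodules of a finite product; establishing it (which is routine) is the part I would write out in full, after which everything else follows from Theorem \ref{t11.15} and the two elementary facts above.
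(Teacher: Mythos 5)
Your proposal is correct and follows essentially the same route as the paper: induction on $n$, splitting $M$ into a single factor times the product of the rest, applying Theorem \ref{t11.15} to that two-factor decomposition, and then invoking the induction hypothesis together with a product characterization of $S$-second submodules. The only difference is cosmetic --- the paper splits off the last factor rather than the first, and it cites the auxiliary characterization of $S$-second submodules of a finite product from \cite[Theorem 2.12]{FF22} instead of proving it.
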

\begin{proof}
We apply induction on $n$. For $n = 1$, the result is true. If $n = 2$, then the result follows from Theorem \ref{t11.15}. Now assume that parts (a) and (b) are equal when $k < n$. We shall prove $(b)\Leftrightarrow (a)$ when $k = n$.
Put $R =\acute{ R} \times R_n$, $M =\acute{ M} \times M_n$, and $S =\acute{ S} \times S_n$, where $\acute{R}=R_1\times R_2\times ... \times R_{n-1}$, $\acute{M}=M_1\times M_2\times ... \times M_{n-1}$, and $\acute{S}=S_1\times S_2\times ... \times S_{n-1}$.  Also, $N =\acute{ N} \times N_n$, where $\acute{N}=N_1\times N_2\times ... \times N_{n-1}$. Then by Theorem \ref{t11.15}, $N$ is an $S$-2-absorbing second submodule of $M$ if
and only if $Ann_{\acute{ R}}(\acute{ N})\cap \acute{ S}\not = \emptyset$ and $N_n$ is an $S_n$-2-absorbing second submodule of
$M_n$ or $\acute{N}$ is an $\acute{S}$-2-absorbing second submodule of
$\acute{M}$  and $Ann_{R_n}(N_n)\cap S_n\not = \emptyset$
or $\acute{N}$ is an $\acute{S}$-second submodule of
$\acute{M}$  and $N_n$ is an $S_n$-second submodule of
$M_n$.  Now the rest follows from induction hypothesis and \cite[Theorem 2.12]{FF22}.
\end{proof}

For a submodule $N$ of an $R$-module $M$ the \emph{second radical} (or \emph{\emph{second socle}}) of $N$ is defined  as the sum of all second submodules of $M$ contained in $N$ and it is denoted by $sec(N)$ (or $soc(N)$). In case $N$ does not contain any second submodule, the second radical of $N$ is defined to be $(0)$ (see \cite{CAS13} and \cite{AF11}).

\begin{thm}\label{p111.11}
Let $M$ be a finitely generated comultiplication $R$-module. If $N$ is a $S$-2-absorbing second submodule of $M$, then $sec(N)$ s a  $S$-2-absorbing second submodule of $M$.
\end{thm}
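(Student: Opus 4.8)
The plan is to run the dual of the argument used for Theorem \ref{t181.3}: pass from $sec(N)$ to its annihilator ideal, use that the radical of an $S$-2-absorbing ideal is again $S$-2-absorbing, and come back via the comultiplication hypothesis. First, since $N$ is an $S$-2-absorbing second submodule of $M$, Proposition \ref{p1.6}(a) gives that $Ann_R(N)$ is an $S$-2-absorbing ideal of $R$; in particular $Ann_R(N)\cap S=\emptyset$. Then Lemma \ref{p9.11} shows that $\sqrt{Ann_R(N)}$ is an $S$-2-absorbing ideal of $R$ as well, and by the definition of an $S$-2-absorbing ideal this already entails $\sqrt{Ann_R(N)}\cap S=\emptyset$.

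The crucial step is the identity $Ann_R(sec(N))=\sqrt{Ann_R(N)}$, which plays here the role that $(rad(N):_RM)=\sqrt{(N:_RM)}$ (\cite[Theorem 4]{MM86}) played in the proof of Theorem \ref{t181.3}. Since $M$ is a comultiplication module, every submodule is of the form $(0:_MI)$; one checks that the second submodules of $M$ contained in $N$ are precisely the submodules $(0:_MP)$ with $P$ a prime ideal of $R$ containing $Ann_R(N)$, so that $sec(N)=(0:_M\sqrt{Ann_R(N)})$, and then that $Ann_R\bigl((0:_M\sqrt{Ann_R(N)})\bigr)=\sqrt{Ann_R(N)}$. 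I expect this to be the main obstacle, and it is where finite generation of $M$ is really used, namely to control the double-annihilator behaviour of the ideal $\sqrt{Ann_R(N)}$ and to guarantee that $sec(N)$ does not collapse; it can either be quoted from the earlier literature on the second radical of comultiplication modules or obtained by a short localization argument.

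Granting that identity, the proof concludes immediately: by the first paragraph $Ann_R(sec(N))=\sqrt{Ann_R(N)}$ is an $S$-2-absorbing ideal of $R$, so applying Lemma \ref{l1.11} to the submodule $sec(N)$ of the comultiplication $R$-module $M$ yields that $sec(N)$ is an $S$-2-absorbing second submodule of $M$, as desired.
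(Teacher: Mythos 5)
Your proposal follows exactly the paper's own argument: Proposition \ref{p1.6}(a) gives that $Ann_R(N)$ is an $S$-2-absorbing ideal, Lemma \ref{p9.11} passes this to $\sqrt{Ann_R(N)}$, the identity $Ann_R(sec(N))=\sqrt{Ann_R(N)}$ for finitely generated comultiplication modules (quoted in the paper from \cite[2.12]{AF25}) transfers the property to $Ann_R(sec(N))$, and Lemma \ref{l1.11} concludes. The approach and all intermediate steps coincide with the paper's proof.
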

\begin{proof}
Let  $N$ be a $S$-2-absorbing second submodule of $M$. By Proposition \ref{p1.6} (a), $Ann_R(N)$ is an $S$-2-absorbing ideal of $R$. Thus by Lemma \ref{p9.11}, $\sqrt{Ann_R(N)}$ is an $S$-2-absorbing ideal of $R$. By \cite[2.12]{AF25}, $Ann_R(sec(N))=\sqrt{Ann_R(N)}$.  Therefore, $Ann_R(sec(N))$ is an $S$-2-absorbing ideal of $R$. Now the result follows from  Lemma \ref{l1.11}.
\end{proof}

\begin{prop}\label{t92.16}
Let $S$ be a m.c.s. of $R$ and $f : M \rightarrow \acute{M}$ be a monomorphism of R-modules. Then we have the following.
\begin{itemize}
  \item [(a)] If $N$ is an $S$-2-absorbing second submodule of $M$, then $f(N)$ is an $S$-2-absorbing second submodule of $\acute{M}$.
  \item [(b)] If $\acute{N}$ is an $S$-2-absorbing second submodule of $\acute{M}$ and $\acute{N} \subseteq f(M)$, then $f^{-1}(\acute{N})$ is an $S$-2-absorbing second submodule of $M$.
 \end{itemize}
\end{prop}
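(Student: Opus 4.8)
The plan is to lean on two elementary facts about a monomorphism $f : M \to \acute{M}$: that $f$ commutes with scalar multiplication, so $f(cL)=cf(L)$ for every submodule $L$ of $M$ and every $c \in R$; and that injectivity yields $f^{-1}(f(K)) = K$ for every submodule $K$ of $M$, together with $f(f^{-1}(\acute{K})) = \acute{K}\cap f(M)$ for every submodule $\acute{K}$ of $\acute{M}$. A point worth flagging in advance: in both parts I expect to keep the \emph{same} fixed $s \in S$ that witnesses the $S$-2-absorbing second property, because transporting the defining condition along $f$ never forces a change of witness (unlike, say, the passage through Theorem \ref{t2.3}, where one replaces $s$ by a power).

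For part (a), I would first settle the annihilator hypothesis. If $r\in R$ and $rf(N)=0$, then $f(rN)=0$, so $rN=0$ by injectivity; conversely $rN=0$ gives $rf(N)=f(rN)=0$. Hence $Ann_R(f(N))=Ann_R(N)$, and in particular $Ann_R(f(N))\cap S=\emptyset$. Next, take $a,b\in R$ and a submodule $K$ of $\acute{M}$ with $abf(N)\subseteq K$. Then $f(abN)=abf(N)\subseteq K$, i.e. $abN\subseteq f^{-1}(K)$. Invoking the fixed $s\in S$ from the $S$-2-absorbing second property of $N$, one of $saN\subseteq f^{-1}(K)$, $sbN\subseteq f^{-1}(K)$, $sabN=0$ must hold; applying $f$ and using $f(f^{-1}(K))\subseteq K$ turns these into $saf(N)\subseteq K$, $sbf(N)\subseteq K$, or $sabf(N)=0$ respectively, which is exactly what is needed for $f(N)$.

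For part (b), set $L:=f^{-1}(\acute{N})$, a submodule of $M$. Since $\acute{N}\subseteq f(M)$ and $f$ is injective, $f(L)=\acute{N}$. As in (a), $rL=0 \iff f(rL)=0 \iff r\acute{N}=0$, so $Ann_R(L)=Ann_R(\acute{N})$ is disjoint from $S$. Now suppose $a,b\in R$ and $K$ is a submodule of $M$ with $abL\subseteq K$; applying $f$ gives $ab\acute{N}=abf(L)=f(abL)\subseteq f(K)$. With $s\in S$ the fixed witness for $\acute{N}$, one of $sa\acute{N}\subseteq f(K)$, $sb\acute{N}\subseteq f(K)$, $sab\acute{N}=0$ holds. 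Using injectivity, $f^{-1}(f(K))=K$, so $sa\acute{N}=f(saL)\subseteq f(K)$ gives $saL\subseteq K$ (and similarly for $sb$), while $sab\acute{N}=f(sabL)=0$ gives $sabL=0$. Hence $f^{-1}(\acute{N})$ is an $S$-2-absorbing second submodule of $M$.

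I do not anticipate a genuine obstacle here; the only step requiring mild care is the set-theoretic bookkeeping with preimages — namely that $f^{-1}(f(K))=K$ and $f(f^{-1}(\acute{N}))=\acute{N}$ really do use injectivity (and, for the latter, the hypothesis $\acute{N}\subseteq f(M)$) — and the observation that a single $s$ serves as the witness throughout, so no iteration of $s$ is necessary.
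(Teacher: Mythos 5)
Your proof is correct, but it follows a different mechanism than the paper's. You work directly from Definition \ref{d2.1}: given a test submodule $K$ on one side of $f$, you transport it across via $f^{-1}(K)$ (in part (a)) or $f(K)$ (in part (b)), apply the defining implication for the given $S$-2-absorbing second submodule, and pull the conclusion back using $f(f^{-1}(K))\subseteq K$ and, for (b), $f^{-1}(f(K))=K$ together with $f(f^{-1}(\acute{N}))=\acute{N}$ (which needs $\acute{N}\subseteq f(M)$). The paper instead invokes the element-free characterization of Theorem \ref{t2.3}(b): it takes the equalities $s^2abN=s^2aN$, $s^2abN=s^2bN$, or $s^3abN=0$ and simply applies $f$ (resp.\ transfers them to $f^{-1}(\acute{N})$ via injectivity), so no test submodule ever appears. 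The trade-offs: the paper's route is shorter on the page but silently changes the witness from $s$ to $s^3$ when converting back through Theorem \ref{t2.3} $(b)\Rightarrow(a)$, and in part (b) it glosses over why the equalities descend to $f^{-1}(\acute{N})$ (this is where your explicit use of $\acute{N}\subseteq f(M)$ and injectivity is cleaner). Your route keeps the same witness $s$ throughout and makes the set-theoretic bookkeeping explicit; both arguments are sound.
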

\begin{proof}
(a) As $Ann_R(N) \cap S=\emptyset$ and $f$ is a monomorphism, we have $Ann_R(f(N)) \cap S=\emptyset$. Let $a, b \in R$. Since $N$ is an $S$-2-absorbing second submodule of $M$, there exists a fixed $s \in S$ such that $s^2abN=s^2aN$ or $s^2abN=s^2bN$ or $s^3abN=0$. Thus
 $s^2abf(N)=s^2af(N)$ or $s^2abf(N)=s^2bf(N)$ or $s^3abf(N)=0$, as needed.

(b)  $Ann_R(\acute{N}) \cap S=\emptyset$ implies that $Ann_R(f^{-1}(\acute{N})) \cap S=\emptyset$.
Now let $a, b \in R$. As $\acute{N}$ is an $S$-2-absorbing second submodule of $M$, there exists a fixed $s \in S$ such that $s^2ab\acute{N}=s^2a\acute{N}$ or $s^2ab\acute{N}=s^2b\acute{N}$ or $s^2ab\acute{N}=0$. Therefore,
$s^2abf^{-1}(\acute{N})=s^2af^{-1}(\acute{N})$ or $s^2abf^{-1}(\acute{N})=s^2bf^{-1}(\acute{N})$ or $s^2abf^{-1}(\acute{N})=0$, as requested.
\end{proof}

\begin{thm}\label{l8.11} Let $S$ be a m.c.s. of $R$ and let $M$ be an $R$-module. If $E$ is an injective $R$-module and $N$ is an $S$-2-absorbing submodule of $M$ such that $Ann_R(Hom_R(M/N,E)) \cap S \not =\emptyset$, then $Hom_R(M/N,E)$ is a $S$-2-absorbing second $R$-module.
\end{thm}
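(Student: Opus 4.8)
The plan is to pass from the characterization of $S$-2-absorbing submodules in Theorem~\ref{l181.4} to the characterization of $S$-2-absorbing second submodules in Theorem~\ref{t2.3}(b) by dualizing with the contravariant functor $Hom_R(-,E)$, which is exact since $E$ is injective. Write $\bar M = M/N$ and $L = Hom_R(\bar M,E)$; this is the $R$-module whose $S$-2-absorbing-secondness (as a submodule of itself) we must establish. By hypothesis $Ann_R(L)\cap S=\emptyset$, the side condition in the definition; in particular $L\neq 0$, since $1\in S$.

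The crux is the identity, valid for every $r\in R$,
$$
rL \;=\; q_r^{*}\bigl(Hom_R\bigl(M/(N:_M r),E\bigr)\bigr),
$$
where $q_r\colon \bar M \to M/(N:_M r)$ is the canonical epimorphism and $q_r^{*}$ the induced monomorphism. To prove it, factor multiplication by $r$ on $\bar M$ as $\bar M \xrightarrow{\,q_r\,} M/(N:_M r) \xrightarrow{\ \sim\ } (rM+N)/N \hookrightarrow \bar M$ — the kernel of multiplication by $r$ on $\bar M$ is $(N:_M r)/N$ and its image is $(rM+N)/N$ — and apply $Hom_R(-,E)$: the dual of the inclusion is onto (injectivity of $E$), the dual of the isomorphism is an isomorphism, and $q_r^{*}$ is a monomorphism; since the composite of the three is multiplication by $r$ on $L$, taking images yields the displayed formula. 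Two consequences follow at once: (i) if $(N:_M r_1)=(N:_M r_2)$ then $q_{r_1}=q_{r_2}$, hence $r_1L=r_2L$; (ii) if $(N:_M r)=M$ then $Hom_R\bigl(M/(N:_M r),E\bigr)=Hom_R(0,E)=0$, hence $rL=0$.

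Since $N$ is $S$-2-absorbing, Theorem~\ref{l181.4} supplies a fixed $s\in S$ such that for all $a,b\in R$ either $(N:_M s^2ab)=(N:_M s^2a)$, or $(N:_M s^2ab)=(N:_M s^2b)$, or $(N:_M s^3ab)=M$. Feeding these three alternatives into (i) and (ii) gives, for all $a,b\in R$, that $s^2abL=s^2aL$, or $s^2abL=s^2bL$, or $s^3abL=0$ — precisely condition (b) of Theorem~\ref{t2.3} for $L$ with the same multiplier $s$. Hence Theorem~\ref{t2.3} $(b)\Rightarrow(a)$ shows that $L=Hom_R(M/N,E)$ is an $S$-2-absorbing second $R$-module. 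The only genuinely nonroutine step is the $Hom_R(-,E)$-identity above, which is where injectivity of $E$ is essential (to extend homomorphisms off submodules); the remainder is bookkeeping, the one subtlety being to keep the single multiplier $s$ unchanged through (i) and (ii), which it is. If one prefers to avoid the factorization language, (ii) is a one-line check, and (i) may be verified directly: each $r_1f$ with $f\in L$ kills $(N:_M r_1)/N$, so using injectivity of $E$ to extend the induced map on $(r_2M+N)/N$ to all of $\bar M$ produces $g\in L$ with $r_2g=r_1f$.
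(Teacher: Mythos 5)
Your argument is correct and follows essentially the same route as the paper: both pass from the colon-ideal characterization of Theorem~\ref{l181.4} to condition (b) of Theorem~\ref{t2.3} via the identity $a\,Hom_R(M/N,E)=Hom_R(M/(N:_Ma),E)$, the only difference being that you prove this identity directly from the injectivity of $E$ whereas the paper cites it from \cite[Theorem 3.13(a)]{AF101}. (You also silently read the hypothesis as $Ann_R(Hom_R(M/N,E))\cap S=\emptyset$, which is clearly the intended statement, since the displayed $\not=\emptyset$ would contradict the conclusion.)
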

\begin{proof}
Let $a, b \in R$. Since $N$ is an $S$-2-absorbing submodule
of $M$, there is a fixed $s \in S$ such that either $(N:_Mabs^2)=(N:_Mas^2)$ or $(N:_Mabs^2)=(N:_Mbs^2)$ or $(N:_Mabs^3)=M$ by Theorem \ref{l181.4}.
Since $E$ is an injective $R$-module, by replacing $M$ with $M/N$ in \cite[Theorem 3.13 (a)]{AF101}, we have $Hom_R(M/(N:_Ma), E)=aHom_R(M/N,E)$. Therefore,
$$
abs^2Hom_R(M/N, E)=Hom_R(M/(N:_Mabs^2), E)=
$$
$$
Hom_R(M/(N:_Mas^2), E)=as^2Hom_R(M/N,E)
$$
or
$$
abs^2Hom_R(M/N, E)=Hom_R(M/(N:_Mabs^2), E)=
$$
$$
Hom_R(M/(N:_Mbs^2), E)=bs^2Hom_R(M/N,E)
$$
or
$$
abs^3Hom_R(M/N, E)=Hom_R(M/(N:_Mabs^3), E)=
$$
$$
Hom_R(M/M, E)=0,
$$
as needed
\end{proof}

\begin{thm}\label{t8.13}
Let $M$ be a $S$-2-absorbing second $R$-module and $F$ be a right exact linear covariant functor over the category of $R$-modules. Then $F(M)$ is a $S$-2-absorbing second $R$-module if $Ann_R(F(M) ) \cap S\not =\emptyset$.
\end{thm}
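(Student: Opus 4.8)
The plan is to reduce the statement to the numerical characterization of $S$-$2$-absorbing second modules in Theorem \ref{t2.3}(b) and then to transport that characterization through $F$ by means of two elementary facts. First, since $F$ is an $R$-linear covariant functor, for every $r\in R$ the homothety $\mu_r:M\to M$, $m\mapsto rm$, is carried by $F$ to the homothety $\mu_r:F(M)\to F(M)$ (because $\mu_r=r\cdot\mathrm{id}_M$ and $F$ respects the $R$-module structure on $\mathrm{Hom}$-sets). Second, applying the right exact functor $F$ to the exact sequence $M\xrightarrow{\mu_r}M\xrightarrow{\pi}M/rM\to 0$ yields an exact sequence $F(M)\xrightarrow{\mu_r}F(M)\xrightarrow{F(\pi)}F(M/rM)\to 0$; hence $\ker F(\pi)=\mathrm{im}\,\mu_r=rF(M)$, so $F(\pi)$ induces an isomorphism $F(M)/rF(M)\cong F(M/rM)$.

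Next I would record the key translation. For $c,r\in R$ one has $cM\subseteq rM$ if and only if the composite $M\xrightarrow{\mu_c}M\xrightarrow{\pi}M/rM$ vanishes, and $cM=0$ if and only if $\mu_c:M\to M$ vanishes. Applying $F$ to a zero morphism (or a zero composite) gives a zero morphism, and by the two facts above, vanishing of $F(\pi)\circ\mu_c$ on $F(M)$ says precisely that $cF(M)=\mathrm{im}\,\mu_c\subseteq\ker F(\pi)=rF(M)$, while vanishing of $\mu_c$ on $F(M)$ says $cF(M)=0$. Thus every inclusion $cM\subseteq rM$ and every vanishing $cM=0$ among homothety images is inherited by $F(M)$.

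Now apply this. Since $M$ is an $S$-$2$-absorbing second $R$-module, Theorem \ref{t2.3} (direction $(a)\Rightarrow(b)$) produces a fixed $s\in S$ such that for all $a,b\in R$ either $s^2abM=s^2aM$, or $s^2abM=s^2bM$, or $s^3abM=0$. Fix $a,b\in R$. In the first case, since $s^2abM\subseteq s^2aM$ always, the equality is exactly the inclusion $s^2aM\subseteq s^2abM$; by the translation this gives $s^2aF(M)\subseteq s^2abF(M)$, and as the reverse inclusion is automatic we get $s^2abF(M)=s^2aF(M)$. The second case is symmetric, giving $s^2abF(M)=s^2bF(M)$, and the third gives $s^3abF(M)=0$. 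Since the same $s$ works for every pair $a,b$, and together with the annihilator hypothesis on $F(M)$, Theorem \ref{t2.3} (direction $(b)\Rightarrow(a)$) shows that $F(M)$ is an $S$-$2$-absorbing second $R$-module.

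The main obstacle is entirely a matter of making the two functorial facts precise: that $R$-linearity of $F$ forces $F(\mu_r)$ to be multiplication by $r$ on $F(M)$, and that right exactness identifies $F(M)/rF(M)$ with $F(M/rM)$ through the natural map, so that $\ker F(\pi)=rF(M)$. Once these are established the three cases are dispatched uniformly with no further computation, much as in the proof of Theorem \ref{l8.11}, where $Hom_R(M/-,E)$ played the role now taken by the general right exact linear covariant functor $F$.
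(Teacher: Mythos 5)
Your proposal is correct and follows essentially the same route as the paper, which simply invokes \cite[Lemma 3.14]{AF101} (the fact that a right exact linear covariant functor transports the relations $cM\subseteq rM$ and $cM=0$ to $F(M)$) together with Theorem \ref{t2.3} $(a)\Leftrightarrow(b)$. The only difference is that you supply a full proof of that cited functorial lemma rather than quoting it.
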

\begin{proof}
 This follows from \cite[Lemma 3.14]{AF101} and Theorem \ref{t2.3} $(a) \Leftrightarrow (b)$.
\end{proof}

\bibliographystyle{amsplain}

\end{document}